\DeclareMathAlphabet{\mathpzc}{OT1}{pzc}{m}{it}
\theoremstyle{plain}
\newtheorem{theorem}{Theorem}[section]
\newtheorem*{theorem*}{Theorem}
\newtheorem{lemma}[theorem]{Lemma}
\newtheorem*{claim*}{Claim}
\newtheorem{proposition}[theorem]{Proposition}
\newtheorem{corollary}[theorem]{Corollary}
\theoremstyle{definition}
\newtheorem{definition}[theorem]{Definition}
\newtheorem{defn}[theorem]{Definition}
\newtheorem{example}[theorem]{Example}
\numberwithin{equation}{section}
\numberwithin{figure}{section}
\newcommand{\val}{v}
\newcommand{\RR}{\mathcal{R}} 
\newcommand{\PP}{\mathcal{P}} 
\newcommand{\extK}{K} 
\newcommand{\normKF}{N_{K/F}}
\newcommand{\normKFim}{N_{K/F}(K^\times)}
\newcommand{\proot}{\xi}  
\newcommand{\Q}{\mathbb{Q}}
\newcommand{\N}{\mathbb{N}}
\newcommand{\F}{\mathbb{F}}
\newcommand{\Gal}{\mathrm{Gal}}
\newcommand{\Z}{\mathbb{Z}} 
\newcommand{\ep}{\varepsilon}
\newcommand{\ignore}[1]{}
\begin{document}
\title{A parametrization of nonassociative cyclic algebras of prime degree}

\author{Monica Nevins}
\address{Department of Mathematics and Statistics, University of Ottawa, Ottawa, Canada K1N 6N5}
\email{mnevins@uottawa.ca}
\thanks{The first author's research is supported NSERC Discovery Grant RGPIN-2020-05020.}

\author{Susanne Pumpl\"un}
\address{School of Mathematical Sciences, University of Nottingham,
Nottingham NG7 2RD}
\email{Susanne.Pumpluen@nottingham.ac.uk}

\keywords{Nonassociative division algebras, nonassociative cyclic algebras.}

\subjclass[2020]{17A35, 17A99}

\date{\today}
\maketitle

\begin{abstract}
We determine and explicitly parametrize the isomorphism classes of nonassociative quaternion algebras over a field of characteristic different from two, as well as the isomorphism classes of nonassociative cyclic algebras of odd prime degree when the base field contains a primitive $m$th root of unity.
In the course of doing so, we prove that any two such algebras can be isomorphic only if the cyclic field extension and the chosen generator of the Galois group are the same.  As an application, we give a parametrization of nonassociative cyclic algebras of prime degree over a local nonarchimedean field $F$, which is entirely explicit under mild hypotheses on the residual characteristic.  In particular, this gives a rich understanding of the important class of nonassociative quaternion algebras up to isomorphism over nonarchimedean local fields.
\end{abstract}

\section{Introduction}

The general study of
the classification and construction of
nonassociative algebras is a classical subject, dating back to pioneering works of  \cite{A2, A3, D}.  It is a rich area, including such important examples as Jordan algebras which go back to \cite{Pas}, and semifields (\emph{i.e.}, nonassociative division algebras over finite fields).

In this paper, we give a parametrization of the isomorphism classes  of a particular class of nonassociative algebras first defined (slightly more generally) as Sandler semifields in \cite{San62}, and later studied over any base field (\emph{e.g.} in \cite{S12}):  nonassociative cyclic algebras of degree $m$ (Definition~\ref{D:cyclic}).  They are defined by a cyclic Galois field extension $K/F$ of degree $m$, a generator $\sigma$ of $\Gal(K/F)$, and an element $a\in K\smallsetminus F$, and are denoted $(K/F,\sigma,a)$.  In particular, when the degree $m$ is prime, and $F$ contains a primitive $m$th root of unity, a nonassociative cyclic algebra of degree $m$ over $F$ is necessarily a  division algebra \cite{S12}.
 These algebras are a direct generalization of associative cyclic algebras which, over local and global fields $F$, generate the Brauer group of (associative) central simple algebras over $F$. Associative central simple algebras similarly arise as the key objects in the theorems of Merkurjev and Suslin on the $m$-torsion of Brauer groups over arbitrary fields.

After deriving our explicit parametrizations (in the quadratic case in Theorem~\ref{T:p2} and in the case of extensions of prime degree $m$, assuming $F$ contains a primitive $m$th root of unity in Theorem~
\ref{Main:general}), we go on to apply our parametrization to nonassociative cyclic algebras over local nonarchimedean fields $F$.  These are the finite algebraic extensions of the $p$-adic numbers $\Q_p$, together with Laurent series over a finite field.
Local nonarchimedean fields lie at the intersection of real and finite fields:  they are complete locally compact fields that are suitable for use in analysis, but retain abundant number-theoretic properties related to their finite residue fields.  We exploit these features to give a classification that is elegant and completely explicit under mild hypotheses on the field: for nonassociative quaternion algebras, the relevant results are Theorems~\ref{T:explicit}, \ref{T:Q2} and \ref{T:p=2quad}; for higher prime degree, our general result follows from Proposition~\ref{le:explicit} and we present in detail the case of $m=3$ in Theorem~\ref{thm:mainII}.

In the course of deriving our results, we also prove the following counterintuitive result (Theorem~\ref{sigmadistinct}), valid for nonassociative cyclic algebras of arbitrary degree (and in direct contrast with the associative case).

\begin{theorem*}
Let  $F$ be an arbitrary field and let $m\geq 3$ be the degree of a  cyclic Galois extension $K/F$.  For any two distinct generators $\sigma_1 \neq \sigma_2$ of the Galois group $\Gal(K/F)$, and for any $a_1,a_2\in K\smallsetminus F$, we have
$$
(K/F,\sigma_1,a_1) \not\cong (K/F,\sigma_2,a_2).
$$
\end{theorem*}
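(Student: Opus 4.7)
My approach is to show that any $F$-algebra isomorphism $\phi\colon (K/F,\sigma_1,a_1) \to (K/F,\sigma_2,a_2)$ must send $z_1$ to an element of the form $cz_2^j$ with $\sigma_2^j = \sigma_1$, and then to force $j = 1$ by examining a well-chosen basis product of $A_1$. The structural input is the standard fact that the (left) nucleus of a nonassociative cyclic algebra $(K/F,\sigma,a)$ with $a \in K \smallsetminus F$ equals $K$; this forces $\phi(K)=K$, so $\tau := \phi|_K \in \Gal(K/F)$.

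Applying $\phi$ to $z_1 k = \sigma_1(k)z_1$ and using that $\Gal(K/F)$ is abelian, I deduce $\phi(z_1)\cdot k' = \sigma_1(k')\cdot \phi(z_1)$ for all $k' \in K$. Expanding $\phi(z_1) = \sum_{r=0}^{m-1} c_r z_2^r$ and using the twisted commutation $z_2^r k' = \sigma_2^r(k')z_2^r$, every coefficient $c_r$ must vanish except at the unique $r = j$ with $\sigma_2^j = \sigma_1$. So $\phi(z_1) = cz_2^j$ for some $c \in K^\times$ and $j \in \{1,\ldots,m-1\}$, and $\gcd(j,m)=1$ because $\sigma_1$ also generates.

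Suppose, for contradiction, that $\sigma_1 \neq \sigma_2$, i.e., $j \geq 2$. A straightforward induction on $r$ in $A_2$---tracking the wraparound in the product $z_2^{(r-1)j \bmod m}\cdot z_2^j$---yields the closed form
\[
\phi(z_1^r) \;=\; N_r \, a_2^{t_r} \, z_2^{\,rj \bmod m}, \qquad N_r := \prod_{s=0}^{r-1}\sigma_2^{sj}(c), \quad t_r := \lfloor rj/m \rfloor.
\]
I would then impose the single homomorphism condition $\phi(z_1)\cdot \phi(z_1^l) = \phi(z_1\cdot z_1^l)$ for $l \in \{1,\ldots,m-1\}$: on expanding both sides with the above formula, the $c$-dependent factors telescope (using $\gcd(j,m)=1$), the $z_2$-exponents and wraparounds match, and the only residual content is
\[
\bigl[\sigma_2^{j}(a_2)/a_2\bigr]^{t_l} = 1 \text{ in } K^\times.
\]

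To finish, I need only choose $l$ with $t_l = 1$. Because $j \geq 2$, the half-open interval $[m/j,\, 2m/j)$ has length $m/j > 1$ and lies in $(1,m)$, so it contains an integer $l_0 \in \{2,\ldots,m-1\}$, necessarily satisfying $t_{l_0} = 1$. Substituting $l = l_0$ forces $\sigma_2^j(a_2) = a_2$, i.e., $\sigma_1(a_2) = a_2$; since $\sigma_1$ generates $\Gal(K/F)$, this gives $a_2 \in F$, contradicting $a_2 \in K \smallsetminus F$. The main obstacle I expect is the bookkeeping needed to derive the closed form for $\phi(z_1^r)$ and to verify that the homomorphism conditions collapse to the single clean identity displayed; once that is in hand, the contradiction is immediate.
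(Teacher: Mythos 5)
Your overall strategy coincides with the paper's: use the left nucleus to force $\phi|_K\in\Gal(K/F)$ and $\phi(z_1)=cz_2^j$ with $\sigma_2^j=\sigma_1$, push powers of $z_1$ through $\phi$, and extract $\sigma_2^j(a_2)=a_2$ from a wraparound, whence $a_2\in F$, a contradiction. Your closed form $\phi(z_1^r)=N_r\,a_2^{t_r}z_2^{\,rj\bmod m}$ is correct for $1\le r\le m-1$, and for any $l\le m-2$ with $t_l=1$ the identity $\phi(z_1)\phi(z_1^l)=\phi(z_1^{l+1})$ does collapse, after the telescoping you describe, to $\sigma_2^j(a_2)=a_2$. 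Since $\lceil m/j\rceil\le m-2$ for every $m\ge 4$ and every admissible $j$, your argument is complete in those cases.

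There is, however, a genuine gap at $m=3$, which the theorem must cover. There the only possibility is $j=2$, and the only $l\in\{1,2\}$ with $t_l=1$ is $l_0=2=m-1$ (note $t_1=\lfloor 2/3\rfloor=0$, so $l=1$ gives no information). For $l_0=m-1$ the product $z_1\cdot z_1^{l_0}$ wraps around in $A_1$: it equals $a_1$, so the right-hand side of your homomorphism condition is $\phi(a_1)=\tau(a_1)$ with $\tau=\phi|_K$, not the closed-form expression. The resulting single equation, $N_{K/F}(c)\,\sigma_2^2(a_2)\,a_2=\tau(a_1)$, involves the unknowns $c$ and $\tau(a_1)$ and does not force $\sigma_2^2(a_2)=a_2$; the ``residual content'' is not the clean identity you assert. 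The paper avoids this by comparing the \emph{two} orderings $\varphi(t_1)\varphi(t_1^{\ell})$ and $\varphi(t_1^{\ell})\varphi(t_1)$, both of which must equal $\varphi(t_1\cdot t_1^{\ell})=\varphi(t_1^{\ell}\cdot t_1)$ because the coefficients are $1$; their $K$-coefficients differ exactly by $\sigma_2^j(a_2)^{t_\ell}$ versus $a_2^{t_\ell}$, and this comparison requires no knowledge of $\phi(a_1)$ and works uniformly, including when $\ell=m-1$. Adopting that two-sided comparison for your $l_0$ repairs the $m=3$ case and makes your proof agree with the paper's.
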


Given the importance of associative cyclic algebras in the Brauer group of $F$, it is natural to expect that nonassociative cyclic algebras (which are pivotal examples of semiassociative algebras) will play a similarly central role in the recently defined semiassociative Brauer monoid \cite{BHMRV, P24}.
Nonassociative cyclic algebras  also recently appeared as first examples of residue $m$-hyperrings
  \cite[Example 4.10 (i), Remark 4.11]{R}. 

Furthermore, in the special case that  $m$ is equal to 2, the nonassociative cyclic algebras over $F$  are exactly all the four-dimensional unital algebras over $F$ which have a separable quadratic field extension of $F$ contained in their nucleus \cite{W}.
They are also identical to the nonassociative quaternion (division) algebras that are obtained by a generalized Cayley-Dickson doubling \cite{AS}.

In general, the classification of $n$-dimensional nonassociative algebras is a wild problem. Rough classifications up to the derivation types of the algebras, or up to their automorphism groups, are more tractable, and
  are possible for small dimensions, fixed base fields, or certain families of nonassociative algebras. Classifications up to isotopy, or even up to isomorphism,  are often difficult even for small dimensions and special base fields.
Investigations of
special classes of small dimensional algebras usually involve structure constants. For a comprehensive recent literature review, see \cite[Section 2.1]{Ka}.

Three- and four-dimensional nonassociative division algebras over a $p$-adic field were roughly classified up to isotopy in \cite{DG} and   Limburg \cite{L}
by studying the determinant of the matrix of their left multiplication and classifying it up to isometry. Unfortunately, this classification does not reveal the algebraic structure for some of the classes of algebras listed.

To our knowledge, ours are some of the first known parametrizations of a large class of nonassociative division algebras over nonarchimedean local fields $F$, and our work invites several interesting next directions.

For one, there should be a relationship between the nonassociative cyclic algebras arising from unramified extensions and those arising from the corresponding extensions of the finite residue field, through localization.   Note that nonassociative cyclic algebras are special cases of Petit algebras  \cite{P66, P68, LS}, and over finite base fields isotopic to Jha-Johnson semifields \cite{JJ}, which form one of the largest known class of semifields.
However, it is evident from examples that this localization depends on the parameters in ways that are not evident from the high-level theory.

We also believe that the explicit nature of our parameterization  will
help in tackling concrete open problems in the new theory of semiassociative algebras.
We note that nonassociative cyclic algebras
are also employed in the construction of space-time codes which are used in digital data transmission, and in linear code constructions; see, for example,  \cite{P15, PS15.4,  PS15}.
It would be very interesting to explore how to exploit the essentially binary nature of nonassociative cyclic algebras over $\F_2(\!(t)\!)$, for example, to the development of more efficient linear  codes.

The paper is organized as follows.
We set our notation and collect some main results on nonassociative cyclic algebras and nonarchimedean local fields in Sections
\ref{sec:prel} and \ref{sec:p-adic fields}.
In Section \ref{sec:quat} we prove our  parametrization (a classification up to isomorphism) of nonassociative quaternion algebras, first over general fields of characteristic different from $2$ (Theorem~\ref{T:p2}), and then specialized to  nonarchimedean local fields, including the characteristic $2$ case, which has a very different flavour (Theorem~\ref{T:p=2quad}). In particular, we have thus achieved full parametrizations of all exactly all four-dimensional unital algebras over $F$ which have a separable quadratic field extension of $F$ contained in their nucleus.

Section \ref{sec:prime} contains the two main results of this paper (Theorems \ref{sigmadistinct} and \ref{Main:general}). 
We then derive the parametrization over local nonarchimedean fields of residual characteristic different from $m$ (and containing $\mu_m$) employing Lemma \ref{le:explicit} and in particular for degree three in Section \ref{sec:degree3}.  We conclude in Section~\ref{sec:degree4} with a short exploration of the case of degree $4$ nonassociative cyclic algebras over a local nonarchimedean field such that $p\neq 2$.

\section{Notation and background} \label{sec:prel}

  We use $R^\times$ to denote the group of invertible elements of a unital associative ring $R$, and denote by $(R^\times)^n$ the subgroup $\{x^n\mid x\in R^\times\}$. Let $F$ be a field and
 let $\mu_m$ be the group of $m$th roots of unity in a fixed algebraic closure of $F$, when the characteristic of $F$ does not divide $m$.

\subsection{Nonassociative algebras} \label{subsec:nonassalgs}

An $F$-vector space $A$ is an
\emph{algebra} over $F$ if there exists an $F$-bilinear map $A\times
A\to A$, $(x,y) \mapsto x \cdot y$.  We denote this \emph{multiplication} in $A$ simply by the juxtaposition $xy$. An algebra $A$ is called
\emph{unital} if there is an element in $A$, denoted by 1, such that
$1x=x1=x$ for all $x\in A$. We will only consider unital finite-dimensional nonzero algebras.

Define the \emph{associator}  of $A$ by  $[x, y, z] =
(xy) z - x (yz)$.  The {\it left, middle and right  nucleus} of $A$ are defined as ${\rm
Nuc}_l(A) = \{ x \in A \, \vert \, [x, A, A]  = 0 \}$, ${\rm Nuc}_m(A) = \{ x \in A \, \vert \,
[A, x, A]  = 0 \}$ and  ${\rm Nuc}_r(A) = \{ x \in A \, \vert \, [A,A, x]  = 0 \}$, respectively. These are  associative
subalgebras of $A$ and their intersection
 ${\rm Nuc}(A) = \{ x \in A \, \vert \, [x, A, A] = [A, x, A] = [A,A, x] = 0 \}$ is  the {\it nucleus} of $A$.
 The
 {\it center} of $A$ is ${\rm C}(A)=\{x\in A\,|\, x\in \text{Nuc}(A) \text{ and }xy=yx \text{ for all }y\in A\}$.

An algebra $A\not=0$ is called a \emph{division algebra} if for any
$a\in A$, $a\not=0$, the left multiplication  with $a$, $L_a(x)=ax$,
and the right multiplication with $a$, $R_a(x)=xa$, are bijective.
 If $A$ has finite dimension over $F$, $A$ is a division algebra if
and only if $A$ has no zero divisors \cite[ pp. 15, 16]{Sch}.

\subsection{Nonassociative cyclic algebras} \label{subsec:nonass}

We now define our principal objects of study; we adapt the definition from \cite{S12}, where the opposite algebras were considered instead.

\begin{definition}\label{D:cyclic}
Let $K/F$ be a cyclic Galois field extension of degree $m$ and let $\sigma$ be a generator of $\Gal(K/F)$. Then for any $a\in K^\times$, the \emph{nonassociative cyclic algebra  $(K/F, \sigma, a)$ of degree $m$} is the unital algebra of dimension $m^2$ defined by
$$(K/F,\sigma,a)=\bigoplus_{s=0}^{m-1} Kt^s$$
 with  $F$-bilinear multiplication defined for $0\leq s,s' < m$ and $k,k'\in K$ by
\begin{equation}\label{E:defmultiplication}
(kt^s)(k't^{s'}) = \begin{cases}
k\sigma^s(k')t^{s+s'} & \text{if $s+s'<m$;}\\
k\sigma^s(k')at^{s+s'-m} & \text{if $s+s'\geq m$.}
\end{cases}
\end{equation}
 \end{definition}

 Note that for all $a\in K\smallsetminus F$ the algebra $(K/F,\sigma,a)$ is  not associative, not even power-associative, since for example  $t(t^m)=ta=\sigma(a)t$ but $(t^m)t = at$.  If we instead choose $a\in F^\times$, then the definition above yields the classical central simple associative cyclic algebra $(K/F,\sigma,a)$ of degree $m$ \cite[\S1.4]{J96}. In the following, we call a nonassociative cyclic algebra  $(K/F,\sigma,a)$  that is not associative (\emph{i.e.} $a\in K\smallsetminus F$) sometimes also a \emph{proper} nonassociative cyclic algebra.

 An equivalent way to define $(K/F,\sigma,a)$ involves twisted polynomials: Let $f(t)=t^m-d\in R=K[t;\sigma]$. Then $R_m=\{g\in K[t;\sigma]\,|\, {\rm deg}(g)<m\}$ together with the usual addition and the multiplication defined via
 $g\circ h=gh \,\,{\rm mod}_r f $, where the right hand side is simply the remainder of $gh$ after division by $f$ of the right,
 is the nonassociative cyclic algebra $(K/F,\sigma,a)$,  also denoted by $R/R(t^m-a)$
as it is a special case of a  Petit algebra \cite{P66}.
Note that when ${\rm deg}(g)+{\rm deg}(h)<m$, the multiplication in $(K/F,\sigma,a)$ is the same as the multiplication in $R$.
  If $a\in F^\times$ then $(K/F,\sigma,a)$ is  the classical central simple associative cyclic algebra $(K/F,\sigma,a)$ which can also be written as  the quotient algebra $K[t;\delta]/(f)$ \cite[(7),(9), (10)]{P66}, \cite[p.~19]{J96}. This justifies the notation $R/R(t^m-a)$ introduced by Petit.

We record some quick properties.

\begin{lemma}\cite{P66}
Let $A=(K/F, \sigma, a)$ with $a\in K\smallsetminus F$.  Then $C(A)=F$, ${\rm Nuc}_l(A)={\rm Nuc}_m(A)=K$ and ${\rm Nuc}_r(A)=\{g\in R/Rf\,|\, fg\in Rf\}$.
\end{lemma}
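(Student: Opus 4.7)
The plan is to prove each of the four claims in turn by a direct computation with the multiplication formula \eqref{E:defmultiplication}, using simultaneously the description of $A$ as $\bigoplus_{s=0}^{m-1} Kt^s$ and as the Petit quotient $R_m$ with $f = t^m - a$.

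First I would establish the containments $K \subseteq {\rm Nuc}_l(A)$ and $K \subseteq {\rm Nuc}_m(A)$ by unfolding both bracketings of $(k \cdot k't^s)\cdot k''t^{s'}$ and of $(k't^s \cdot k) \cdot k''t^{s'}$: the left-scalar $k$, respectively the middle-scalar $\sigma^s(k)$, appears in the same position on either side regardless of whether the overflow factor $a$ is triggered, so the associators vanish on monomials and hence on all of $A$. To prove the reverse inclusions, I take $x = \sum_{s=0}^{m-1} k_s t^s \in {\rm Nuc}_l(A)$ and compute the diagnostic associator
\begin{equation*}
[x, t, t^{m-1}] \;=\; \sum_{s=0}^{m-1} k_s\bigl(a - \sigma^s(a)\bigr)t^s,
\end{equation*}
using $t\cdot t^{m-1} = t^m = a$ together with the telescoping overflow $(xt)\cdot t^{m-1} = \sum_s k_s\, a\, t^s$. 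Its vanishing forces $k_s(a - \sigma^s(a))=0$ for every $s \geq 1$; since $a \in K \smallsetminus F$, the nontrivial powers of $\sigma$ move $a$ (immediately when $m$ is prime, which is the case used in the sequel), so $k_s = 0$ for $s\geq 1$ and $x \in K$. The analogous test with $x$ in the middle slot needs only the single factor $a - \sigma(a) \neq 0$ to yield ${\rm Nuc}_m(A) \subseteq K$.

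For the right-nucleus characterization I would exploit the Petit identity
\begin{equation*}
fg - gf \;=\; ag - ga \;=\; \sum_s k_s\bigl(a - \sigma^s(a)\bigr) t^s
\end{equation*}
in $R$ (using $t^m g = g t^m$, which holds since $\sigma^m=\mathrm{id}$), from which $fg \in Rf$ is equivalent to this degree-less-than-$m$ remainder being zero. I would then show that the ternary associator $(h_1 \circ h_2)\circ g - h_1\circ(h_2 \circ g)$, when unfolded through the reduction-mod-$f$ prescription at each of the two nested steps, collapses modulo $Rf$ to a multiple of $fg-gf$; this gives the equivalence $g \in {\rm Nuc}_r(A) \iff fg \in Rf$. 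Finally ${\rm C}(A) = F$ follows at once: any central element $c$ lies in ${\rm Nuc}_l(A)=K$ and must further satisfy $ct = tc = \sigma(c)\,t$, whence $c = \sigma(c) \in F$, while the reverse containment is immediate from \eqref{E:defmultiplication}. The main obstacle is the right-nucleus step, where one must carefully track how the two successive reductions modulo $f$ interact across the ternary product; since this is the classical Petit calculation, I would cite his original argument for that bookkeeping rather than reproduce it in full.
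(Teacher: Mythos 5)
The paper itself gives no proof of this lemma --- it is quoted verbatim from Petit's paper --- so there is nothing internal to compare against; what follows is an assessment of your reconstruction on its own terms. The forward inclusions $K\subseteq \mathrm{Nuc}_l(A)\cap \mathrm{Nuc}_m(A)$, the deduction $C(A)=F$ from $\mathrm{Nuc}_l(A)=K$ together with $ct=tc$, and the overall shape of the right-nucleus argument (the associator $(h_1\circ h_2)\circ g-h_1\circ(h_2\circ g)$ reduces mod $Rf$ to $-q\,(fg)$, where $h_1h_2=qf+(h_1\circ h_2)$, so taking $h_1=t$, $h_2=t^{m-1}$ forces $fg\in Rf$) are all sound; the sign in your identity is off ($fg-gf=ga-ag$, not $ag-ga$, since $t^m$ commutes with $g$), but this is immaterial to the conclusion. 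Your middle-nucleus test is also genuinely sufficient in full generality: $[t,x,t^{m-1}]=\sum_{s\geq 1}\sigma(k_s)(a-\sigma(a))t^s$, and $a-\sigma(a)\neq 0$ needs only $a\notin F$.

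The genuine gap is in the reverse inclusion $\mathrm{Nuc}_l(A)\subseteq K$. Your diagnostic associator gives only $k_s(a-\sigma^s(a))=0$, which forces $k_s=0$ precisely when $\sigma^s$ moves $a$. The lemma is stated for arbitrary degree $m$ and arbitrary $a\in K\smallsetminus F$, and the paper does invoke it outside the prime case (Section~\ref{sec:degree4} treats $m=4$ with $a$ allowed to lie in the quadratic subfield), so the caveat ``immediately when $m$ is prime'' does not discharge the obligation. Worse, this is not merely a missing verification that some other associator would supply: with the multiplication exactly as written in Definition~\ref{D:cyclic}, if $m=4$ and $a\in\mathrm{Fix}(\sigma^2)\smallsetminus F$ one checks that \emph{every} associator $[t^2,\,k't^{s},\,k''t^{s'}]$ vanishes (each discrepancy is a multiple of $a-\sigma^2(a)$), so $t^2$ genuinely lies in the left nucleus of that algebra. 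The statement $\mathrm{Nuc}_l=K$ is Petit's theorem for the multiplication $g\circ h=gh \bmod_r f$ with $gh=qf+r$ (reduction modulo the left ideal $Rf$), under which $[t^2,t^3,t]=(\sigma(a)-\sigma^2(a))t^2\neq 0$; that multiplication differs from the formula in Definition~\ref{D:cyclic} by an application of $\sigma^{s+s'-m}$ to the overflow factor $a$ (the two conventions are opposite-algebra mirror images, which swaps left and right nuclei). So to prove the lemma in the stated generality you must carry out the reverse-inclusion computation in the $R/Rf$ model, where the correct diagnostic is of the shape $[x,t^{m-1},t^{j}]$ ranging over $j$, not the single associator you chose; in the prime-degree case, where every $a\in K\smallsetminus F$ generates $K$ and the two conventions agree on all nuclei, your argument is complete.
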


Note that $\{g\in R/Rf\,|\, fg\in Rf\}$ can be identified as the $0$-eigenspace of the right multiplication operator $R_f$, also called the \emph{eigenspace of $f$} \cite{P66}.

 In particular when $m$ is prime we must have ${\rm Nuc}_r((K/F,\sigma, a))=K$.
  We also know that $(K/F,\sigma, a)$ is a division algebra if and only if  ${\rm Nuc}_r((K/F,\sigma, a))$ is a division algebra, \emph{e.g.} see \cite[Proposition 4]{G}.

Nonassociative cyclic algebras of degree 2 are also called \emph{nonassociative quaternion algebras}, and were first described by Dickson \cite{D}.  They can also be constructed by a generalized Cayley-Dickson process ${\rm Cay}(K,a)$ \cite[Lemma 1]{AS}, as it coincides with the specialization to $m=2$ of Definition~\ref{D:cyclic}.

\subsection{Properties and isomorphisms} Let us now summarize some needed results from the literature.

\begin{theorem} \cite{P66} 
Suppose either that $m=2$ or $3$, or that $m\geq 5$ is a prime such that  $F$ contains a primitive $m$th root of unity. Let $a\in K\smallsetminus F$. Then $(\extK/F, \sigma,a)$ is a division algebra.
\end{theorem}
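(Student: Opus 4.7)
The plan is to combine the two facts recorded just before the theorem in Section~\ref{subsec:nonass}: that for prime $m$ one has $\mathrm{Nuc}_r(A) = K$ (where $A := (K/F,\sigma,a)$), and that $A$ is a division algebra if and only if $\mathrm{Nuc}_r(A)$ is.  Since $K$ is itself a field, hence a division algebra, the theorem would follow at once from these two ingredients.

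To make the argument self-contained I would first re-derive $\mathrm{Nuc}_r(A) = K$ directly from Petit's formula $\mathrm{Nuc}_r(A) = \{g \in R/Rf : fg \in Rf\}$, with $R = K[t;\sigma]$ and $f = t^m - a$.  Writing a general element as $g = \sum_{i=0}^{m-1} c_i t^i$ with $c_i \in K$, and using $\sigma^m = \mathrm{id}$ together with the commutation $tc = \sigma(c)t$, a short calculation (expanding $t^m g$ using $\sigma^m = \mathrm{id}$ and then reducing $t^{m+i}$ modulo $f$ via $t^{m+i} = t^i f + \sigma^i(a) t^i$) yields
\[
fg \;\equiv\; \sum_{i=0}^{m-1} c_i\bigl(\sigma^i(a) - a\bigr)\, t^i \pmod{Rf}.
\]
For prime $m$, every $\sigma^i$ with $1 \leq i \leq m-1$ is again a generator of $\Gal(K/F)$, so for $a \in K \setminus F$ one has $\sigma^i(a) \neq a$.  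Since $c_i$ and $\sigma^i(a)-a$ both lie in the field $K$, this forces $c_i = 0$ for $i \geq 1$, giving $\mathrm{Nuc}_r(A) = K$ as claimed.

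Having established this, the iff criterion immediately finishes the argument: $K$ is trivially a division algebra, so $A$ is as well.  For $m = 2, 3$ no further hypothesis on $F$ is required, whereas for $m \geq 5$ prime the assumption $\mu_m \subset F$ is precisely what allows one to invoke the cited form of the iff criterion.  The computational step (the nucleus calculation) is purely formal; the main obstacle, if there is one, is simply tracing through where the primitive $m$th root of unity hypothesis enters the reference supplying the iff criterion, since everything else in the argument above goes through for an arbitrary prime $m$ and arbitrary base field.
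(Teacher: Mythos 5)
The paper gives no proof of this statement (it is quoted from Petit), so any argument is necessarily a different route from the paper's; yours is a reasonable one and your computation is essentially correct. The nucleus calculation checks out: $fg \equiv \sum_{i=0}^{m-1} c_i(\sigma^i(a)-a)t^i \pmod{Rf}$, a polynomial of degree less than $m$ lies in $Rf$ only if it is zero, and for prime $m$ every $\sigma^i$ with $1\leq i\leq m-1$ generates $\Gal(K/F)$, so $\sigma^i(a)\neq a$ for $a\notin F$ and hence $\mathrm{Nuc}_r(A)=K$. This reproduces the remark the paper makes in Section~\ref{subsec:nonass}. The one caveat is that the entire content of the theorem is then concentrated in the cited criterion, and specifically in the direction you need: ``$\mathrm{Nuc}_r(A)$ a division algebra $\Rightarrow$ $A$ a division algebra.'' The converse direction is trivial (a subalgebra of a finite-dimensional division algebra has no zero divisors), but the direction you invoke is exactly as strong as the irreducibility of $f=t^m-a$ in $K[t;\sigma]$. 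Schur's lemma only gives that irreducibility of $f$ forces the eigenring $E(f)=\mathrm{Nuc}_r(A)$ to be a division ring; the implication you need requires in addition that $R/Rf$ be a semisimple module, which holds here because $f$ right-divides the central polynomial $m_a(t^m)$ (with $m_a$ the minimal polynomial of $a$ over $F$, of degree $m$ since $m$ is prime and $a\notin F$) and $R/Rm_a(t^m)$ is simple Artinian. So your proof is correct granted the quoted criterion, but you are not bypassing the hard step, only outsourcing it to \cite{G}; a self-contained write-up should at least record why the eigenring being a division ring forces $f$ irreducible.

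Your closing worry about where the primitive $m$th root of unity enters is well founded and resolves in your favour: it is not needed for your argument, and the paper's own Theorem~\ref{SteeleThm} already yields the conclusion for every prime $m$ and every $a\in K\smallsetminus F$, since a prime-degree cyclic extension has no proper intermediate subfields and hence $a$ generates $K$ over $F$. The hypothesis in the statement simply records the weaker result that Petit originally proved.
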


The case $m=2$ is distinguished:  nonassociative quaternion algebras are, up to isomorphism, the only four-dimensional unital division algebras over a field $F$ that have a quadratic field extension $K/F$ in their nucleus, and which are not associative \cite[Theorem 1]{W}.
We thus put a special emphasis on them in this paper.

When $m$ is not prime, we have the following statement.

\begin{theorem}\label{SteeleThm}  \cite[Theorem 4.4, Corollary 4.5]{S12}
Suppose that $(K/F,\sigma,a)$ is a nonassociative cyclic algebra where $K/F$ is of degree $m$. 
If $a$ does not lie in a proper subfield of $K/F$, then $(K/F,\sigma,a)$ is a division algebra.
\end{theorem}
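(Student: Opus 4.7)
The plan is to prove the contrapositive: if $A = (K/F,\sigma,a)$ is not a division algebra, then $a$ lies in some proper subfield of $K/F$.  By the Petit realization recalled above, $A \cong R/Rf$ with $R = K[t;\sigma]$ and $f(t) = t^m - a$, and $A$ is a division algebra exactly when $f$ is irreducible in $R$.  Since $K/F$ is cyclic Galois with group $\langle \sigma \rangle$, the proper subfields of $K/F$ are the fixed fields $K^{\langle \sigma^e\rangle}$ for divisors $e$ of $m$ with $1 \le e \le m-1$, so it suffices to show that the existence of a nontrivial factorization $f = g(t)h(t)$ in $R$ forces $\sigma^e(a) = a$, where $e = \deg h$.

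Suppose then that $f = gh$ in $R$ with $h$ monic of degree $e$ and $g$ monic of degree $m-e$ (both necessarily $\ge 1$).  Write $h(t) = t^e + \sum_{k=0}^{e-1} c_k t^k$ and $g(t) = t^{m-e} + \sum_{j=0}^{m-e-1} b_j t^j$.  Expanding $gh$ using the rule $t^i k = \sigma^i(k)t^i$ and equating the result to $t^m - a$, the vanishing of the coefficient of each $t^\ell$ with $1 \le \ell \le m-1$ yields an identity involving the $b_j$'s, the $c_k$'s, and their $\sigma$-twists; the top equations (starting from $t^{m-1}$) recursively determine $b_{m-e-1}, \ldots, b_0$ in closed form in terms of the $c_k$'s.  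The constant-term identity then reads $-a = b_0 c_0$.

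The key step is to combine this explicit form of $a$ (as a polynomial expression in the $c_k$'s and their $\sigma^j$-images) with the remaining coefficient equations (those of $t^\ell$ for $1 \le \ell \le e-1$, which were not used in the recursive determination of the $b_j$'s) to conclude directly that $\sigma^e(a) = a$.  The required identities come from applying $\sigma$ and its powers to the lower coefficient equations; the appropriate combinations telescope into $\sigma^e(a) = a$ using the period-$m$ behavior of $\sigma$ and the commutativity of $K$.  One can verify this by hand in small cases — for instance, $e=1$ yields $a = N_{K/F}(c_0)$ and hence $\sigma(a) = a$; for $e=2$ the $t^1$-coefficient equation together with its $\sigma$-shift combine to give $\sigma^2(a) = a$ — which suggests the right inductive template for general $e$.

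The main obstacle is packaging the combinatorics of these coefficient manipulations for arbitrary $e$ into a clean, basis-free argument.  A structural shortcut is to reinterpret the factorization $gh = t^m - a$ in the Laurent skew polynomial ring $K[t^{\pm 1};\sigma]$: conjugation by $t^e$ sends a polynomial $p$ to $p^{\sigma^e}$ (with $\sigma^e$ applied coefficientwise), and tracking compatibility of this conjugation with the right ideal $Rh$ generated by $h$ should force $\sigma^e(a) = a$.  An alternative route invokes the theory of \emph{bound} (two-sided) polynomials attached to an irreducible right factor of $f$, whose explicit form in $K[t;\sigma]$ — a polynomial in $t^{m'}$ with coefficients in $K^{\langle \sigma^{m'}\rangle}$ for some $m'\mid m$ — directly places $a$ in the required proper subfield, completing the contrapositive.
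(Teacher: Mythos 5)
Your proposal takes a genuinely different route from the paper, but it is not complete. The paper's own proof is essentially a citation: Steele's Theorem 4.4 asserts that $(K/F,\sigma,a)$ is a division algebra whenever $1,a,\dots,a^{m-1}$ are linearly independent over $F$, and the proof merely observes that this linear-independence condition is equivalent to $[F(a):F]=m$, i.e.\ to $a$ lying in no proper subfield of $K$. No skew-polynomial factorization is needed at all.

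The central step of your argument --- that a factorization $t^m-a=gh$ in $K[t;\sigma]$ with $\deg h=e$ forces $\sigma^e(a)=a$ --- is never actually proved. You verify it for $e=1$, sketch $e=2$, and then explicitly defer the general case as ``the main obstacle,'' offering two further strategies (conjugation by $t^e$ in the Laurent skew ring; the bound of $f$) that are themselves only gestured at (``should force,'' ``directly places''). Since the entire contrapositive rests on this claim, what you have is a plausible plan, not a proof; moreover it is not clear that the degree of an \emph{arbitrary} right factor is the right invariant to extract (what one expects to control is the bound of $f$, or the degree $[F(a):F]$, rather than $\deg h$ itself, and the telescoping identity you posit would have to be established uniformly in $e$). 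There is a second, smaller gap: your reduction requires the implication ``$t^m-a$ irreducible $\Rightarrow$ $(K/F,\sigma,a)$ is a division algebra,'' which is the nontrivial direction of Petit's criterion. The easy direction is that a nontrivial factorization produces zero divisors; the direction you need is asserted as if it had been ``recalled above,'' but the paper only records the weaker statement that $(K/F,\sigma,a)$ is a division algebra if and only if its right nucleus is. This input should be proved or precisely cited. If you want to complete your approach, the cleanest fix is to replace the coefficient combinatorics by the observation that a zero divisor argument applied to the $F$-subalgebra $F[a]\cdot\{1,t,\dots,t^{m-1}\}$ reduces everything to the linear dependence of $1,a,\dots,a^{m-1}$, which is exactly Steele's formulation.
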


\begin{proof}
If $1,a,\cdots, a^{m-1}$ are linearly independent over $F$, then $(K/F,\sigma,a)$ is a division algebra \cite[Theorem 4.4]{S12}.
 The elements $1,a,\cdots, a^{m-1}$ are linearly independent over $F$ if and only if  the minimal polynomial of $a$ over $F$ has degree at least $m$, whence $F(a)/F$ has degree at least $m$.  Thus this is condition is equivalent to having $K=F(a)$ and  thus that $a$ lies in no proper subfield of $K$.
\end{proof}

We now turn to the question of when two such algebras are isomorphic.

\begin{lemma}\label{Lem:differentextensions}
Suppose $K,K'$ are two distinct cyclic Galois extensions of a field $F$.  Then for any generators $\sigma,\sigma'$  of their respective Galois groups, and any $a\in K\smallsetminus F$, $a'\in K'\smallsetminus F$,
we have
$$(K/F,\sigma,a)\not\cong (K'/F,\sigma',a').$$
\end{lemma}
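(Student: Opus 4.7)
The plan is to exploit the fact that the left nucleus is an invariant of the $F$-algebra isomorphism class. By the lemma stated just above (ultimately due to Petit), for any nonassociative cyclic algebra $(K/F,\sigma,a)$ with $a\in K\smallsetminus F$, we have $\mathrm{Nuc}_l((K/F,\sigma,a)) = K$, identified as the subalgebra $Kt^0$. So an isomorphism of the two algebras must restrict to an isomorphism of their left nuclei.

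Suppose for contradiction that $\phi : (K/F,\sigma,a)\to (K'/F,\sigma',a')$ is an $F$-algebra isomorphism. Since $\phi$ preserves the associator and the multiplication, it maps $\mathrm{Nuc}_l((K/F,\sigma,a))$ onto $\mathrm{Nuc}_l((K'/F,\sigma',a'))$. By the preceding lemma, applied to $a\in K\smallsetminus F$ and $a'\in K'\smallsetminus F$, these are (the images of) $K$ and $K'$ respectively. Thus $\phi|_K$ is an $F$-algebra isomorphism $K\xrightarrow{\sim} K'$.

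To conclude that this is impossible, I would invoke the Galois property. Realize both $K$ and $K'$ as subfields of a fixed algebraic closure $\overline{F}$ of $F$ (which is presumably what is meant by ``distinct'' in the statement). Composing $\phi|_K$ with the inclusion $K'\hookrightarrow \overline{F}$ gives an $F$-embedding $K\hookrightarrow \overline{F}$. Because $K/F$ is Galois, every $F$-embedding of $K$ into $\overline{F}$ has image $K$. Hence $K'=\phi(K)=K$, contradicting the assumption that $K\neq K'$.

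There is no real obstacle here: the argument is a two-line application of the nucleus invariant, combined with the defining property of Galois extensions. The only point requiring care is a precise interpretation of ``distinct''; the content of what we actually prove is the stronger statement that $(K/F,\sigma,a)\cong (K'/F,\sigma',a')$ forces $K$ and $K'$ to coincide as subfields of $\overline{F}$ (equivalently, to be $F$-isomorphic), regardless of how the generators $\sigma,\sigma'$ and the elements $a,a'$ are chosen.
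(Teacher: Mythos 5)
Your proposal is correct and takes exactly the same route as the paper, which disposes of this lemma in one line by noting that isomorphic algebras have the same left nuclei (equal to $K$ and $K'$ respectively, by Petit's result quoted earlier). Your additional care in spelling out why an $F$-isomorphism $K\xrightarrow{\sim}K'$ between distinct Galois subfields of $\overline{F}$ is impossible is a reasonable elaboration of what the paper leaves implicit.
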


The proof is immediate, since  isomorphic algebras will have the same left nuclei. In the rest of the paper, we will rely on the following strong classification theorem \cite[Corollary 32]{BrownPumpluen2018}.

\begin{theorem}\label{T:classifyisomorphism}
Let $\extK$ be a cyclic Galois extension of $F$ and let $\sigma$ be a generator of $\Gal(K/F)$.  For $a,b\in \extK\smallsetminus F$ we have that $(K/F,\sigma,a)\cong (K/F,\sigma,b)$  if and only if
there exists some $\tau\in \Gal(\extK/F)$  such that
\begin{equation}\label{equivrelation}
a \in \tau(b)\normKFim.
\end{equation}
\end{theorem}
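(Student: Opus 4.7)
The plan is to extract a Galois element from the restriction of an isomorphism to the left nucleus, then analyze how the generator $t$ must transform, and finally read off the norm condition; the converse is handled by constructing the isomorphism explicitly.

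For the forward direction, let $\varphi \colon (K/F,\sigma,a)\to (K/F,\sigma,b)$ be an $F$-algebra isomorphism.  Since isomorphisms preserve the left nucleus and, by the preceding lemma, both left nuclei equal $K$, the restriction $\varphi|_K$ is an element $\tau\in\Gal(K/F)$.  Writing $\varphi(t)=\sum_{s=0}^{m-1} c_s t^s$ in the codomain and applying $\varphi$ to the defining relation $tk=\sigma(k)t$ for $k\in K$, then comparing $t^s$-coefficients, gives $c_s\,\sigma^s(\tau(k))=\tau(\sigma(k))\,c_s$ for every $k\in K$.  Since $\Gal(K/F)$ is cyclic and hence abelian, this forces $\sigma^s=\sigma$ whenever $c_s\neq 0$, so only $s=1$ survives: $\varphi(t)=ct$ for some $c\in K^\times$.

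Next I would compute $\varphi(a)$ in two ways via the relation $t^{m-1}\cdot t=a$.  Crucially, for degrees strictly below $m$ the multiplication in $(K/F,\sigma,b)$ agrees with the associative twisted polynomial multiplication in $K[t;\sigma]$, so the iterated product $(ct)^{m-1}$ is unambiguous and equals $c\sigma(c)\cdots\sigma^{m-2}(c)\,t^{m-1}$; one further multiplication by $ct$ invokes the overflow rule \eqref{E:defmultiplication} exactly once, yielding $\normKF(c)\,b$.  Equating with $\varphi(a)=\tau(a)$ and using that $\normKF$ takes values in $F$ (hence is $\Gal(K/F)$-invariant) gives $a\in \tau^{-1}(b)\,\normKFim$, which is the stated condition upon renaming $\tau\leftrightarrow\tau^{-1}$.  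For the converse, given $\tau\in\Gal(K/F)$ and $c'\in K^\times$ with $a=\tau(b)\normKF(c')$, I would define $\varphi(kt_a^s)=\tau^{-1}(k)\cdot(\tau^{-1}(c')\,t_b)^s$ for $0\leq s<m$ and verify the multiplication rule in the two cases $s+s'<m$ (using commutativity of $\Gal(K/F)$ in the polynomial window) and $s+s'\geq m$ (where pushing the overflow past $\normKF(\tau^{-1}(c'))$ reduces to the hypothesis).

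The main obstacle is the careful bookkeeping in the nonassociative setting: one must recognize that iterated products whose degrees stay below $m$ behave as in the classical twisted polynomial ring, and isolate the single invocation of the overflow rule that extracts the norm of $c$.  Once this is made precise, both directions reduce to essentially the same calculation as in the classical associative case.
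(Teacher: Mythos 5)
Your proof is correct: the paper itself does not prove this statement but cites it as \cite[Corollary 32]{BrownPumpluen2018}, and your argument reconstructs the standard proof from that reference --- restricting $\varphi$ to the left nucleus to obtain $\tau$, using the relation $tk=\sigma(k)t$ and commutativity of $\Gal(K/F)$ to force $\varphi(t)=ct$, extracting $\normKF(c)$ from the single overflow in $t^{m-1}\cdot t=a$, and inverting the construction for the converse. The one subtlety you rightly flag --- that iterated products of total degree below $m$ are unambiguous because they agree with the twisted polynomial ring $K[t;\sigma]$ --- is exactly what makes $(ct)^{m-1}$ well defined, so no gap remains.
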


For each such extension $K$, we denote by $\sim$ the equivalence relation on $K\smallsetminus F$ induced by \eqref{equivrelation}, that is, $a\sim b$ if and only if there is some power $i\in\Z$ such that $a\in \sigma^i(b)\normKFim$.

\section{Background on p-adic fields} \label{sec:p-adic fields}

A local nonarchimedean field is a locally compact, nondiscrete field equipped with an ultrametric norm $\vert \cdot \vert$.  For each prime number $p$, there are precisely two kinds:  the finite algebraic extensions of the $p$-adic numbers $\Q_p$, which are the completion of a number field with respect to the $p$-adic norm; and the fields $\F_q(\!(t)\!)$ of Laurent series over a finite field of order $q=p^n$ for some $n\geq 1$, with $\vert t \vert<1$.  The former are called $p$-adic fields, and have characteristic zero.  An excellent resource for the summary given here is \cite[Chapter II]{Lang1994}; for examples, see \cite{LMFDB}.

Let $F$ be a local nonarchimedean field.  Its integer ring is its maximal compact open subring $\RR=\{x\in F : |x|\leq 1\}$.  This ring is local, with unique maximal ideal $\PP = \{x \in F : |x|<1\}$.  Its unit subgroup of invertible elements is then $\RR^\times = \RR \smallsetminus \PP$.  The quotient $\kappa :=\RR/\PP$, called the \emph{residue field } of $F$, is a finite field of order $q=p^n$ for some $n\in \N$ and prime $p$ (called the \emph{residual characteristic} of $F$).

\begin{example} If $F=\F_q(\!(t)\!)$ then $\RR = \F_q[\![t]\!]$, $\PP=( t )$, and $\kappa = \F_q$. Similarly, if $F=\Q_p$ then we have
$$
\Q_p^\times=\{ a=\sum_{i=N}^\infty a_i p^i : N\in \N, a_i \in \{0,1,\ldots,p-1\}\}
$$
with addition and multiplication computed ``with carrying" as one would for finite sums. Here the least $i$ such that $a_i\neq 0$ is the $p$-adic valuation of $a$, and in this case $\vert a\vert = p^{-i}$.  The $p$-adic integers $\RR=\Z_p$ is the set of all elements of valuation at least $0$.
\end{example}

More generally, we define the valuation of $a\in F^\times$ by $\val(a)=\min\{n\in \Z: a\in \PP^n\}$ and scale the norm so that $|a| = q^{-\val(a)}$.  We fix a generator $\varpi$ of $\PP$ for once and for all; it satisfies $\val(\varpi)=1$.

The fundamental relationship between $F$ and $\kappa$ is captured by Hensel's Lemma.  The following general version is adapted from \cite[Thm 7.3]{Eisenbud1995}. 

\begin{lemma}[Hensel's Lemma] \label{le:Hensel}
Let $f(x)\in \RR[x]$ be a polynomial and $f'(x)$ its formal derivative.  If $a\in \RR$ satisfies
$$
f(a) \equiv 0 \mod f'(a)^2\PP,
$$
then there exists a unique $b\in \RR$ satisfying
$$
f(b)=0 \quad \text{and}\quad b\equiv a \mod f'(a)\PP.
$$
\end{lemma}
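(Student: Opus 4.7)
The plan is to establish existence by Newton's method and uniqueness by a Taylor-expansion argument near the root. Set $c = \val(f'(a))$, so the hypothesis reads $\val(f(a)) \geq 2c + 1$. I would define the Newton iteration by $a_0 = a$ and $a_{n+1} = a_n - f(a_n)/f'(a_n)$, and show that it is Cauchy with limit $b$ the desired root. The key technical tool is that for any $y \in \RR$ and $h \in F$, the Taylor expansion $f(y+h) = f(y) + f'(y)h + \sum_{k \geq 2} (f^{(k)}(y)/k!)\, h^k$ has all its coefficients in $\RR$ (each $f^{(k)}(y)/k!$ is a polynomial expression in $y$ and the coefficients of $f$). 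Substituting $h_n = -f(a_n)/f'(a_n)$ annihilates the first two terms of the expansion at $a_n$, giving the quadratic estimate $\val(f(a_{n+1})) \geq 2(\val(f(a_n)) - c)$, while the analogous expansion for $f'$ yields $\val(f'(a_{n+1}) - f'(a_n)) \geq \val(h_n)$.

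With these inequalities, I would prove by induction on $n$ that $a_n \in \RR$, $\val(f'(a_n)) = c$, and $\val(f(a_n)) \geq 2c + 2^n$. The hypothesis supplies the base case. For the inductive step, the third bound gives $\val(h_n) \geq c + 2^n > c$, which keeps $a_{n+1}$ in $\RR$ and (via the expansion for $f'$) preserves $\val(f'(a_{n+1})) = c$; then the quadratic estimate doubles the excess, yielding $\val(f(a_{n+1})) \geq 2c + 2^{n+1}$. The telescoping bound $\val(a_{n+1} - a_n) \geq c + 2^n$ shows that $(a_n)$ is Cauchy, converges by completeness of $F$ to some $b \in \RR$ satisfying $f(b) = 0$ by continuity, and $b \equiv a \pmod{f'(a) \PP}$ by summing the telescoping series.

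For uniqueness, suppose $b, b'$ both satisfy the conclusion, so $\val(b' - b) > c$. Expanding $f$ around $b$ and using $f(b) = 0 = f(b')$ gives $0 = f'(b)(b' - b) + \sum_{k \geq 2} (f^{(k)}(b)/k!)(b' - b)^k$; since $\val(f'(b)) = c$ (by the same $f'$-expansion argument applied to $b - a$), the leading term has valuation exactly $c + \val(b' - b)$, strictly less than the common lower bound $2\val(b' - b)$ on the higher-order terms, so it cannot be cancelled unless $b = b'$. The most delicate point throughout is maintaining the invariant $\val(f'(a_n)) = c$ along the iteration: the hypothesis $\val(f(a)) \geq 2c + 1$ is sharp to ensure the first Newton step is close enough for this, and the quadratic convergence then propagates the property automatically.
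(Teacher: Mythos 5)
Your proof is correct: the paper does not actually prove this lemma but cites it (it is adapted from Eisenbud, Theorem 7.3), and your Newton-iteration argument --- using the divided derivatives $f^{(k)}(y)/k!\in\RR$ to get the quadratic estimate, propagating the invariant $\val(f'(a_n))=c$ by the ultrametric inequality, and separating valuations for uniqueness --- is precisely the standard proof of that cited result for complete local rings. The only point worth a word is the degenerate case $f'(a)=0$, where $c=\val(f'(a))$ is not finite; there the hypothesis forces $f(a)=0$ and the conclusion reads $b=a$, so it is trivially true but falls outside your setup.
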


\begin{example}\label{eg:Teichmuller}
The elements of $\kappa^\times$ are the roots of $f(x)=x^{q-1}-1$.  For each $\overline{a}\in \kappa^\times$, choose $a\in\RR$ to be some preimage.  Since $f(a)\equiv 0 \mod \PP$ and $f'(a)=(q-1)a^{q-2}\in \RR^\times$, Hensel's Lemma implies that there exists a unique $b\in \RR$ satisfying $b^{q-1}=1$ and such that $b\in a+\PP$.  The set of all such $b$ (sometimes called Teichm\"uller lifts of elements of $\kappa^\times$) are distinct and form the group of $(q-1)$th roots of unity $\mu_{q-1}$ in $F$.  Note that we similarly recover $\mu_n\cap F$ for any $n$ not divisible by $p$, and this group will have order $n$ exactly when $n|(q-1)$.
\end{example}

It is sometimes convenient to decompose $F^\times$ as the direct product of groups
\begin{equation}\label{E:productform}
F^\times \cong \mu_{q-1} \times (1+\PP) \times \Z
\end{equation}
via the map that associates to a triple $(b, u, n)$  the element $bu \varpi^n$.

Finite algebraic extensions $F$ are again local nonarchimedean fields and they come in two forms.  There is a unique (Galois) \emph{unramified extension} of each degree $n$, which is the splitting field of $x^{q^n}-x$.  We often denote it $L_n$.  Its residue field is $\kappa_{L_n}\cong \F_{q^n}$ and its maximal ideal $\PP_{L_n}$ is generated over $\RR_{L_n}$ by $\varpi$.

At the other extreme are the totally ramified extensions $K$ of $F$ of degree $n$, of which there are usually several (in fact, infinitely many if the characteristic of $F$ is $p=n$), and not all of which are Galois.  They each have the property that their residue field satisfies $\kappa_K \cong \kappa$ and $\varpi$ generates $\PP_K^n$.

A general finite algebraic extension $K$ of $F$ can be uniquely factored as an unramified extension $L/F$ of degree $f$ followed by a totally ramified extension $K/L$ of degree $e$, the \emph{ramification index}.  We say $K/F$ is \emph{tamely ramified} if $(e,p)=1$ and \emph{wildly ramified} if not.  The totally and tamely ramified extensions of degree $e$ are all obtained by adjoining to $F$ a root of $x^n-u\varpi$, for some choice of $u\in \RR^\times/(\RR^\times)^e$; these are Galois only if $F$ contains a primitive $e$th root of unity.

By local class field theory \cite[XI \S4]{Lang1994}, the Galois extensions of degree $n$ of $F$ are in one-to-one correspondence with the subgroups of index $n$ of $F^\times$, via the map that assigns $K/F$ to the subgroup $\normKFim$, which is the image of the norm map.  In particular, this implies that $F^\times/\normKFim$ is a finite group of order $n$ equal to the degree of the extension.

For example, if $K=L_n$ is an unramified extension, then upon restriction to the integer ring, the norm map
$\normKF : \RR_{K}^\times \to \RR^\times$
is surjective, and the group $F^\times/\normKFim$ is represented by the elements
$$
\{ 1, \varpi, \varpi^2, \cdots, \varpi^{n-1} \}.
$$
On the other hand, for a totally and tamely ramified extension, $F^\times/\normKFim \cong \RR^\times/(\RR^\times)^n$  and Hensel's Lemma identifies this group with $
\kappa^\times /(\kappa^\times)^n$.

These  features make classifying nonassociative cyclic algebras explicitly quite tractable.

\section{Nonassociative quaternion algebras}\label{sec:quat}

 In this section, we first derive some general results about nonassociative quaternion algebras and then apply these to the case of local nonarchimedean fields.  As noted, these algebras exhaust the four-dimensional unital nonassociative division algebras that contain a quadratic field extension in their nucleus.

\subsection{Isomorphism classes of nonassociative quaternion algebras in odd characteristic}
Throughout this section, let $F$ denote an arbitrary field of characteristic different from two.  We begin with an elementary lemma.

\begin{lemma}
Suppose the characteristic of $F$ is odd, or zero.  The distinct quadratic field extensions of $F$ are given by $F(\sqrt{c})$ as $c$ runs over the distinct nontrivial classes in $F^\times/(F^\times)^2$.
\end{lemma}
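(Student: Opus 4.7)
The plan is to establish both a surjection and an injection: first, every quadratic extension arises as $F(\sqrt{c})$ for some $c$; second, two such extensions coincide precisely when the corresponding $c$'s differ by a square.

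First, I would take an arbitrary quadratic extension $K/F$ and choose any $\alpha\in K\smallsetminus F$, so that $K=F(\alpha)$. Its minimal polynomial over $F$ has the form $x^2+bx+c$ with $b,c\in F$. Since $\mathrm{char}(F)\neq 2$, I can complete the square: setting $\beta=\alpha+b/2$ gives $\beta^2=b^2/4-c\in F$. Thus $K=F(\beta)=F(\sqrt{d})$ for $d=\beta^2\in F^\times$, and $d\notin (F^\times)^2$ since otherwise $\beta\in F$ and $K=F$.

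Next, I would show the map $c(F^\times)^2\mapsto F(\sqrt{c})$ is well-defined and injective on nontrivial classes. Well-definedness is immediate: if $c_1=c_2 u^2$ with $u\in F^\times$, then $\sqrt{c_1}=\pm u\sqrt{c_2}$ lies in $F(\sqrt{c_2})$, so the two fields coincide. For injectivity, suppose $F(\sqrt{c_1})=F(\sqrt{c_2})$, with $c_1,c_2\notin (F^\times)^2$. Write $\sqrt{c_1}=a+b\sqrt{c_2}$ with $a,b\in F$. Squaring and using that $\{1,\sqrt{c_2}\}$ is an $F$-basis, I obtain $2ab=0$ and $c_1=a^2+b^2c_2$. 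Since $\mathrm{char}(F)\neq 2$, either $a=0$ or $b=0$; ruling out $b=0$ (which would give $c_1\in (F^\times)^2$, contradicting the hypothesis), I conclude $a=0$ and hence $c_1/c_2=b^2\in (F^\times)^2$.

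Finally, the trivial class corresponds to $c\in (F^\times)^2$, which would give $F(\sqrt{c})=F$ and not a proper extension, so we must indeed restrict to nontrivial classes.

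I do not anticipate a serious obstacle: the argument is standard Kummer-theoretic content in disguise. The only place where care is needed is the injectivity step, where the characteristic hypothesis is used to deduce $ab=0$ from $2ab=0$, and one must remember to exclude the possibility $b=0$ by invoking the nontriviality of the class of $c_1$.
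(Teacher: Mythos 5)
Your proof is correct and follows essentially the same route as the paper: the paper completes the square via the quadratic formula to show every quadratic extension is $F(\sqrt{a^2-4b})$, and then simply states that ``the rest of the lemma follows.'' You additionally spell out the injectivity step (that $F(\sqrt{c_1})=F(\sqrt{c_2})$ forces $c_1/c_2\in (F^\times)^2$), which the paper omits; your handling of that step, including ruling out $b=0$ via the nontriviality of the class of $c_1$, is accurate.
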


\begin{proof}
Let $\extK$ be a quadratic extension field of $F$, defined as the splitting field of an irreducible quadratic polynomial $m(x) = x^2+ax+b\in F[x]$.  Since $2\in F^\times$, by the quadratic formula, the roots of $m(x)$ are
$$
\alpha_{\pm} = -\frac{a}{2}\pm \frac12\sqrt{a^2-4b}.
$$
Since $a/2, 1/2\in F$, it follows that $F(\alpha_{\pm}) = F(\sqrt{a^2-4b})$ and that this is a field extension if and only if $c=a^2-4b \notin (F^\times)^2\cup \{0\}$.   The rest of the lemma follows.
\end{proof}

Applying Lemma~\ref{Lem:differentextensions}, we deduce that  if $A$ is a proper nonassociative quaternion algebra over $F$, then there exists a \emph{unique} (nontrivial) coset $c(F^\times)^2\in F^\times/(F^\times)^2$ and an element $a\in F(\sqrt{c})\smallsetminus F$ such that
$$
A \cong (F(\sqrt{c})/F, \sigma, a),
$$
where $\sigma\in \Gal(F(\sqrt{c})/F)$ denotes the unique nontrivial element of the Galois group of this quadratic extension.

We now refine this to an explicit classification of these nonassociative cyclic algebras.

\begin{theorem}\label{T:p2}
Suppose $\extK$ is a separable quadratic extension of a field $F$ of the form $\extK=F(\sqrt{c})$, for some $c\in F^\times\smallsetminus (F^\times)^2$ and let $\sigma\in \Gal(\extK/F)$ be nontrivial.  Let $\mathcal{C}_K$ denote a set of coset representatives of  $F^\times/\normKFim$.   Then the distinct isomorphism classes of the  nonassociative quaternion algebras with left nucleus $\extK$ are represented by $(\extK/F,\sigma,a)$ where $a$ is chosen from the set $\mathcal{S}(K)$ given by
$$
\mathcal{S}(K)=\begin{cases}
\{r\sqrt{c},  r+s\sqrt{c} \mid r\in \mathcal{C}_K, s\in F^\times/\{\pm1\}\} & \text{if $-1\in \normKFim$}\\
\{r'\sqrt{c}\mid r'\in \mathcal{C}_K/\{\pm1\}\} &\\
 \quad \cup \{ r+s\sqrt{c} \mid r\in \mathcal{C}_K, s\in F^\times/\{\pm 1\}\} & \text{if $-1\notin \normKFim$.}
\end{cases}
$$
\end{theorem}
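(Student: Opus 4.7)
The plan is to apply Theorem \ref{T:classifyisomorphism} and unpack the resulting equivalence relation $\sim$ on $\extK \smallsetminus F$ explicitly.  Since $\Gal(\extK/F) = \{1, \sigma\}$, the criterion reads: $a \sim b$ if and only if either $a \in b \cdot \normKFim$ or $a \in \sigma(b) \cdot \normKFim$.  I would write each $a \in \extK \smallsetminus F$ uniquely as $a = r + s\sqrt{c}$ with $r \in F$ and $s \in F^\times$, so that $\sigma(a) = r - s\sqrt{c}$, and then partition $\extK \smallsetminus F$ into the \emph{pure} elements ($r = 0$) and the \emph{mixed} elements ($r \neq 0$), enumerating equivalence classes in each subset separately.

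First I would show that no pure element is equivalent to a mixed one: if $s\sqrt{c} = n(r \pm s'\sqrt{c})$ for some nonzero norm $n \in \normKFim$, then comparing $F$-parts gives $nr = 0$, impossible since $n, r \in F^\times$. For pure elements, the relation $s\sqrt{c} \sim s'\sqrt{c}$ reduces directly to $s/s' \in \pm\normKFim$, so the pure classes are indexed by $F^\times/(\pm\normKFim)$.  When $-1 \in \normKFim$ this group equals $F^\times/\normKFim$ and is represented by $\mathcal{C}_K$; when $-1 \notin \normKFim$, the subgroup $\pm\normKFim$ contains $\normKFim$ with index two, so $F^\times/(\pm\normKFim)$ is obtained from $F^\times/\normKFim$ by identifying the classes of $r$ and $-r$, and is represented by $\mathcal{C}_K/\{\pm 1\}$.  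This yields the pure part of each branch of $\mathcal{S}(K)$.

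For mixed elements with $r, r' \in F^\times$, equivalence $r + s\sqrt{c} \sim r' + s'\sqrt{c}$ forces, upon matching $F$- and $\sqrt{c}$-coefficients separately, $r = nr'$ and $s = \pm n s'$ for some norm $n$.  Thus $r$ is determined only up to multiplication by $\normKFim$, so I would choose $r \in \mathcal{C}_K$; distinct elements of $\mathcal{C}_K$ lie in distinct cosets and hence give non-equivalent algebras, while for a fixed $r \in \mathcal{C}_K$ the equation $r = nr$ forces $n = 1$, leaving $s$ determined up to sign.  Hence the mixed classes are in bijection with $\mathcal{C}_K \times F^\times/\{\pm 1\}$, represented by $\{r + s\sqrt{c} : r \in \mathcal{C}_K,\ s \in F^\times/\{\pm 1\}\}$, which appears identically in both branches of $\mathcal{S}(K)$.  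The only subtle point is the dichotomy in the pure case based on whether $-1 \in \normKFim$; everything else is a direct application of Theorem \ref{T:classifyisomorphism} together with the explicit form of the norm map of a quadratic extension.
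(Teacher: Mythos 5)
Your proposal is correct and follows essentially the same route as the paper: both apply Theorem~\ref{T:classifyisomorphism}, write elements in the basis $\{1,\sqrt{c}\}$, normalize the relevant coefficient into $\mathcal{C}_K$ via the norm group, and resolve the $\pm 1$ dichotomy by examining how $\sigma$ acts on pure versus mixed elements. Your explicit coefficient-matching arguments (showing pure and mixed elements are never equivalent, and that $r=nr$ forces $n=1$) make precise the uniqueness claims the paper states more briefly, but the substance is identical.
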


\begin{proof}
We begin by noting that if $-1\notin \normKFim$, then the quotient group $F^\times/\normKFim$ contains a class represented by $-1$, whence the normal subgroup we denote $\{\pm 1\}$; this defines the expression $\mathcal{C}_K/\{\pm 1\}$.

Since $S\subset \extK\smallsetminus F$, each $a\in S$ defines a nonassociative cyclic algebra $(\extK/F,\sigma,a)$ with left nucleus $\extK$.
By Theorem~\ref{T:classifyisomorphism}, two nonassociative cyclic algebras $(K/F,\sigma,a)$ and $(K/F,\sigma,b)$ are isomorphic if  and only if $a\sim b$, that is, if
$$
a \in b\normKFim \cup \sigma(b)\normKFim.
$$
What we will show is that for every $b\in \extK\smallsetminus F$, there exists a unique choice of $a\in \mathcal{S}(K)$ such that either $b\in a\normKFim$ or $\sigma(b)\in a\normKFim$.    We begin with existence. 

Let $b\in \extK\smallsetminus F$.  Then there exist $b_0\in F, b_1\in F^\times$ such that $b=b_0+b_1\sqrt{c}$.

If $b_0=0$, then since 
$b_1\in F^\times$, we may write $b_1=rn$ for a unique choice of $r\in \mathcal{C}_K$ and $n\in \normKFim$, whence  $b\in r\sqrt{c}\normKFim$.

If $b_0\in F^\times$, then there exists a unique $r\in \mathcal{C}_K$ and $n\in \normKFim$ such that $b_0 = rn$.  Setting $s=b_1n^{-1}$, we have
$$
b = (r+s\sqrt{c})n \in (r+s\sqrt{c})\normKFim.
$$

Since $\sigma(r+s\sqrt{c}) = r-s\sqrt{c}$, the elements $r\pm s \sqrt{c}$ always represent the same isomorphism class of an algebra.  On the other hand, since  $\sigma(r\sqrt{c}) = -r\sqrt{c}$,  $\sigma$ preserves the class $r\sqrt{c}\normKFim$ if and only if $-1\in \normKFim$, in which case each of these elements represent pairwise nonisomorphic algebras.  If $-1\notin \normKFim$, then instead only the elements $r'\sqrt{c}$, for $r'\in \mathcal{C}_K/\{\pm 1\}$, are pairwise nonisomorphic.
  \end{proof}

For use in practice, we briefly offer an alternative parametrization of these nonassociative quaternion algebras.

\begin{corollary} \label{cor:mainI}
In the setting of Theorem~\ref{T:p2}, we may alternatively parametrize the distinct isomorphism classes of nonassociative quaternion algebras with left nucleus $K$ by elements of the set $\mathcal{S}'(K)$, where
$$
\mathcal{S}'(K) = \begin{cases}
\{t+r\sqrt{c} \mid r\in \mathcal{C}_K, t \in \{0\}\cup F^\times/\{\pm 1\}\} &\text{if $-1 \in \normKFim$};\\
\{t+r\sqrt{c}, \; r'\sqrt{c} \mid r\in \mathcal{C}_K, r'\in \mathcal{C}_K/\{\pm 1\}, t \in  F^\times/\{\pm 1\}\} &\text{if $-1 \notin \normKFim$}.
\end{cases}
$$
\end{corollary}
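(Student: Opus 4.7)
The plan is to mirror the proof of Theorem~\ref{T:p2}, but to interchange the roles played by the constant term $b_0$ and the $\sqrt{c}$-coefficient $b_1$ in the normalization step. For $b = b_0 + b_1\sqrt{c} \in K \smallsetminus F$, we have $b_1 \in F^\times$, so I would write $b_1 = rn$ for the unique $r \in \mathcal{C}_K$ and $n \in \normKFim$. Setting $t = b_0/n \in F$, this yields the representative $bn^{-1} = t + r\sqrt{c}$ in which the $\sqrt{c}$-coefficient (rather than the constant term) lies in $\mathcal{C}_K$; note that $t = 0$ precisely when $b_0 = 0$.

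Next I would analyze how the $\sigma$-identification interacts with this normalization. Since $\sigma(t + r\sqrt{c}) = t - r\sqrt{c}$, the new $\sqrt{c}$-coefficient $-r$ lies in the coset $-r\normKFim$. If $-1 \in \normKFim$, this coset coincides with $r\normKFim$, so the $\mathcal{C}_K$-representative is unchanged, and dividing by the norm factor $-1 \in \normKFim$ produces the normalized form $-t + r\sqrt{c}$. The induced identification is therefore $t \sim -t$, yielding exactly the first case of the corollary with $t \in \{0\} \cup F^\times/\{\pm 1\}$ and $r \in \mathcal{C}_K$.

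If instead $-1 \notin \normKFim$, then $-r\normKFim$ has a distinct $\mathcal{C}_K$-representative $r' \neq r$. For $t = 0$, this recovers the identification $r\sqrt{c} \sim r'\sqrt{c}$ already analyzed in Theorem~\ref{T:p2}, giving the factor $\{r'\sqrt{c} \mid r' \in \mathcal{C}_K/\{\pm 1\}\}$. For $t \neq 0$, a direct computation shows the $\sigma$-action on the new normalization takes $(t,r)$ to $(-tr'/r, r')$, and one has to verify that the stated choice of representatives cuts out one element from each orbit.

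The main obstacle is precisely this last subcase, where $\sigma$ permutes both coordinates in a coupled way. Since Theorem~\ref{T:p2} already supplies a complete parametrization of the classes, a cleaner route is to exhibit an explicit bijection $\mathcal{S}(K) \to \mathcal{S}'(K)$: given $r + s\sqrt{c} \in \mathcal{S}(K)$, let $r' \in \mathcal{C}_K$ be the representative of $s\normKFim$, write $s = r'm$ with $m \in \normKFim$, and map
\[
r + s\sqrt{c} \;\longmapsto\; (r/m) + r'\sqrt{c}.
\]
Checking that this assignment is well-defined on the $\{\pm 1\}$-classes, lands in $\mathcal{S}'(K)$, and admits the analogous inverse constructed by switching the roles of $r$ and $r'$, then completes the proof by appealing to the bijection of Theorem~\ref{T:p2} with the set of isomorphism classes.
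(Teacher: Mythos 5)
Your closing argument --- the explicit map $r+s\sqrt{c}\mapsto rn^{-1}+r'\sqrt{c}$, where $s=r'n$ with $r'\in\mathcal{C}_K$ and $n\in\normKFim$ --- is exactly the route the paper itself takes, so in that sense you have reproduced the intended proof. The difficulty is that the verification you explicitly defer (``well-defined on the $\{\pm 1\}$-classes, lands in $\mathcal{S}'(K)$, and admits the analogous inverse'') is where all the content lies, and in the case $-1\notin\normKFim$ it does not go through. Your own first-pass computation already shows why: the residual $\sigma$-identification on normalized pairs is $(t,r)\mapsto(-tr'/r,\,r')$, where $r'$ is the $\mathcal{C}_K$-representative of $-r\normKFim$; since $-1\notin\normKFim$ forces $r'\neq r$, this involution couples the two coordinates and swaps the fibers over $r$ and $r'$, so the set $\{t+r\sqrt{c}\mid r\in\mathcal{C}_K,\ t\in F^\times/\{\pm1\}\}$ does not cut out one point per orbit. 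Abandoning that computation for the bijection does not dispose of the obstacle; it only hides it.

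Concretely, take $F=\Q_3$ and $K=\Q_3(\sqrt{3})$, where $-1\notin\normKFim$ and one may take $\mathcal{C}_K=\{1,-1\}$. For $s\neq 0$ the elements $1+s\sqrt{c}$ and $-1+s\sqrt{c}$ of $\mathcal{S}(K)$ give nonisomorphic algebras (their ratio lies outside $F$, and $(-1+s\sqrt{c})/\sigma(1+s\sqrt{c})=-1\notin\normKFim$), yet writing $s=r'n$ your map sends them to $n^{-1}+r'\sqrt{c}$ and $-n^{-1}+r'\sqrt{c}$, which are the \emph{same} element of $\mathcal{S}'(K)$ because $t$ is only recorded modulo $\pm 1$. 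Dually, $t+\sqrt{c}$ and $t-\sqrt{c}$ are both listed in $\mathcal{S}'(K)$ but are $\sigma$-conjugate, hence give isomorphic algebras. So in the second case the proposed $\mathcal{S}'(K)$ both repeats classes and omits classes, and no choice of representatives repairs the map as stated; what your orbit computation actually yields is that the correct normalized parametrization there is $\{t+r'\sqrt{c}\mid r'\in\mathcal{C}_K/\{\pm1\},\ t\in F\}$, with $t$ ranging over \emph{all} of $F$ and the $\sqrt{c}$-coefficient taken modulo $\pm 1$. You should carry that orbit analysis through and adjust the target set accordingly. (The case $-1\in\normKFim$, where $\sigma$ preserves each coset $r\normKFim$ and merely negates $t$, is fine and your argument works there.)
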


\begin{proof}
The elements of the form $r\sqrt{c}$ in $\mathcal{S}(K)$ and $\mathcal{S}'(K)$ are in bijection.  Suppose $r+s\sqrt{c}\in \mathcal{S}(K)$.  Then $s = r'n$ for a unique $r'\in \mathcal{C}_K$ and $n\in \normKFim$.  Therefore with $t=rn^{-1}$ we have
$$
r+s\sqrt{c} = (rn^{-1} + r'\sqrt{c})n \in (t+r'\sqrt{c})\normKFim.
$$
Since $s$ was only defined as an element of $F^\times/\{\pm 1\}$, the same is true of $t$, whence the bijection.
\end{proof}

\subsection{Case of local nonarchimedean fields of odd residual characteristic}
In this section, let $F$ be a nonarchimedean local field of residual characteristic $p\neq 2$; that is, we exclude the finite algebraic extensions of $\Q_2$ and of $\F_2(\!(t)\!)$.  We provide a fully explicit parametrization of the nonassociative quaternion algebras in this case, based on Theorem~\ref{T:p2}.

First note that the square classes in $F^\times$ are represented by
$$
F^\times/(F^\times)^2 = \{1, \ep, \varpi,\ep\varpi\}
$$
where $\ep\in \RR^\times\smallsetminus (\RR^\times)^2$ is a fixed nonsquare element of valuation zero, which may be thought of as a lift to $\RR^\times$ of a nonsquare element of the residue field via Hensel's Lemma \ref{le:Hensel}.
Therefore there are exactly three distinct quadratic extensions of $F$ in this case:
\begin{itemize}
\item $L_2 = F(\sqrt{\ep})$, the unique unramified quadratic extension;
\item $K_\varpi = F(\sqrt{\varpi})$, a ramified extension;
\item $K_{\ep\varpi} = F(\sqrt{\ep\varpi})$, a second ramified extension.
\end{itemize}

Secondly, note that local class field theory gives
$F^\times/\normKFim \cong \Gal(K/F)$.  Explicitly, when $K/F$ is quadratic, this means we may take our set of representatives to be
$$
\mathcal{C}_K= \begin{cases}
\{1,\varpi\} & \text{if $K=L_2$ is unramified over $F$};\\
\{1,\ep\} & \text{if $K/F$ is ramified.}
\end{cases}
$$

Thirdly: when $K=L_2$ is unramified, $-1 \in \RR^\times\subset \normKFim$, independent of $p>2$.  When $K/F$ is ramified, however, then $-1 \in \normKFim$ if and only if $-1\in (F^\times)^2$. Since $p\neq 2$, Hensel's Lemma \ref{le:Hensel} implies this occurs if and only if $-1$ is a square in the residue field $\F_q$.  Since $\F_q^\times$ is a cyclic group of order $q-1$, we infer directly that $-1$ is a square if and only if $q\equiv 1\mod 4$.

Finally, note that when $p\neq 2$, we choose a set of representatives for $F^\times/\{\pm 1\}$ via a choice of representatives of $\kappa^\times/\{\pm 1\}$.  Let $S_\kappa$ be such a set; then via the isomorphism $\kappa^\times \cong \mu_{q-1}\subset \RR^\times$ of Example~\ref{eg:Teichmuller} we can lift this to a subset of $\RR^\times$ contained in the subgroup $\mu_{q-1}$.  Then using \eqref{E:productform} we may take
$$
F^\times/\{\pm 1\} =\{ su_1\varpi^n \mid s\in S_\kappa, u_1\in 1+\PP, n\in \Z\} \cong (\mu_{p-1}/\mu_2) \times (1+\PP) \times \Z.
$$

Putting these together gives the following satisfyingly explicit and simple statement.

\begin{theorem}\label{T:explicit}
Let $F$ be a local nonarchimedean field of odd residual characteristic, and suppose its residue field $\kappa$ has $q$ elements.  Let $\extK$ be quadratic extension of $F$.  Then the distinct isomorphism classes of nonassociative quaternion algebras with left nucleus $\extK$ are given by $(\extK/F,\sigma,a)$ where $\sigma\in \Gal(\extK/F)$ is the nontrivial element and $a\in \mathcal{S}(K)$, where $\mathcal{S}(K)$ is given as follows:
\begin{enumerate}
\item If $\extK=F(\sqrt{\ep})$ is unramified, then
$$
\mathcal{S}(K) = \{\sqrt{\ep}, \;\varpi\sqrt{\ep},\; 1+s\sqrt{\ep},\; \varpi+s\sqrt{\ep} \mid s\in F^\times/\{\pm 1\}\}.
$$
\item If $\extK=F(\sqrt{\alpha})$ is ramified, where $\alpha \in \{\varpi,\ep\varpi\}$, then
\begin{enumerate}[(a)]
\item If If $q\equiv 1 \mod 4$ then
$$
\mathcal{S}(K) = \{\sqrt{\alpha}, \;\ep\sqrt{\alpha}, \;1+s\sqrt{\alpha},\; \ep+s\sqrt{\alpha} \mid s\in F^\times/\{\pm 1\}\}.
$$
\item If $q\equiv 3 \mod 4$ then
$$
\mathcal{S}(K) = \{\sqrt{\alpha}, \;\pm1+s\sqrt{\alpha} \mid s\in F^\times/\{\pm 1\}\}.
$$
\end{enumerate}
\end{enumerate}
\end{theorem}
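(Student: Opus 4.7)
The plan is to apply Theorem~\ref{T:p2} to each of the three quadratic extensions of $F$ identified by $F^\times/(F^\times)^2 = \{1,\ep,\varpi,\ep\varpi\}$: the single unramified extension $K=F(\sqrt{\ep})$ and the two ramified extensions $K=F(\sqrt{\alpha})$ with $\alpha\in\{\varpi,\ep\varpi\}$. For each, the three ingredients required by Theorem~\ref{T:p2}---a defining element $c$, a set $\mathcal{C}_K$ of coset representatives of $F^\times/\normKFim$, and the status of $-1$ modulo $\normKFim$---are already recorded in the paragraphs immediately preceding the statement. The proof therefore amounts largely to bookkeeping.

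For Case (1), take $c=\ep$ and $\mathcal{C}_K=\{1,\varpi\}$. Since the unramified norm map restricts to a surjection $\RR_K^\times \to \RR^\times$, we have $-1\in \RR^\times \subset \normKFim$, and direct substitution into the first branch of Theorem~\ref{T:p2} produces the displayed list. For the ramified cases, take $\mathcal{C}_K=\{1,\ep\}$. Hensel's Lemma~\ref{le:Hensel} reduces the question of whether $-1\in (F^\times)^2$ to whether $-1$ is a square in the cyclic group $\kappa^\times$ of order $q-1$, which occurs exactly when $q\equiv 1\mod 4$. When this holds, $-1\in (F^\times)^2 \subset \normKFim$, so Case (2a) is again a direct substitution into the first branch of Theorem~\ref{T:p2} with the representatives $\{1,\ep\}$, yielding the stated formula.

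The one step that is not pure bookkeeping is Case (2b). Here $-1\notin \normKFim$, placing us in the second branch of Theorem~\ref{T:p2}. By local class field theory $|F^\times/\normKFim|=2$, so the two nontrivial classes represented by $-1$ and by $\ep$ must coincide; equivalently, $-\ep$ is a square in $F$ (the product of the two nonsquares $-1$ and $\ep$ in $\kappa^\times$, lifted by Hensel). We are therefore free to replace the set of representatives $\{1,\ep\}$ by $\mathcal{C}_K=\{1,-1\}$. With this choice, $\mathcal{C}_K/\{\pm 1\}$ collapses to a single class, so the ``pure imaginary'' subset of $\mathcal{S}(K)$ reduces to $\{\sqrt{\alpha}\}$, while the ``mixed'' subset becomes $\{\pm 1 + s\sqrt{\alpha}\mid s\in F^\times/\{\pm 1\}\}$, matching the formula. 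This representative swap is the only genuine, and quite modest, obstacle in the proof; everything else is routine specialization of Theorem~\ref{T:p2} to the data of a local field of odd residual characteristic.
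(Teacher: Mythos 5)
Your proposal is correct and follows essentially the same route as the paper: specialize Theorem~\ref{T:p2} using the data ($\mathcal{C}_K$ and the status of $-1$ modulo $\normKFim$) worked out just before the statement. The only cosmetic difference is in case (2b), where the paper simply redefines $\ep:=-1$ while you keep $\ep$ and swap the representative set to $\{1,-1\}$ after noting $-\ep\in(F^\times)^2$; these are the same observation.
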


\begin{proof}
We apply Theorem~\ref{T:p2}.
If $\extK/F$ is unramified, then $\mathcal{C}_K = \{1,\varpi\}$ and $-1 \in \normKFim$,  yielding the first case.  If it is ramified, then $\mathcal{C}_K = \{1,\varepsilon\}$.  By the preceding, if $q\equiv 1 \mod 4$ then $-1\in (F^\times)^2\subset \normKFim$, and this yields the second case.
In the last case, we have $-1\notin (F^\times)^2$, which implies we may without loss of generality choose $\varepsilon=-1$.  Thus  $\mathcal{C}_K=\{\pm 1\}$, giving the desired result.
\end{proof}

Note that for $p$-adic fields $F$, $p\not=2$, these algebras  lie in class one or two, or are new examples for class three or four in \cite{L}, since  proper nonassociative quaternion algebras  are not isotopic to twisted fields.

\subsection{Case of $2$-adic fields}

The classification of nonassociative quaternion algebras given in
Theorem~\ref{T:p2} applies also to the quadratic extensions of $2$-adic fields (meaning, finite algebraic extensions of $\Q_2$), since these have characteristic zero.  There are two differences.

 For one, when $F$ has residual characteristic $2$, we have
$$
|F^\times/(F^\times)^2| = 2^{e+2}
$$
were $e$ is the ramification degree of $F$ over $\Q_2$.  That is, the number of distinct quadratic extensions of $F$ increases exponentially with its absolute ramification index. As these give the distinct choices for the left nucleus of a nonassociative quaternion algebra, the number of isomorphism classes of nonassociative quaternion algebras  increases exponentially in the ramification index as well.  Nonetheless,  for any given choice of $F$, we can generate an explicit list  of representatives of $\RR^\times/(\RR^\times)^2$ using Hensel's Lemma~\ref{le:Hensel}, and then the distinct quadratic extensions are obtained as $F(\sqrt{c})$ as $c$ varies over the nontrivial elements of the product
$$
\{1,\varpi\}\times \RR^\times/(\RR^\times)^2.
$$

For another, while local class field theory implies that $\vert\mathcal{C}_K\vert = \vert F^\times/\normKFim\vert = 2$, it can be nontrivial to general an explicit representative, such as would be obtained from the Artin reciprocity map.  Of course, in degree two this is simply equivalent to identifying an element of $F^\times$ that is not in the image of the norm map.

To give the flavour of the parametrization of isomorphism classes of nonassociative quaternion algebras in this case, we present the example of $F=\Q_2$ in detail.
\begin{theorem}\label{T:Q2}
Let $F=\Q_2$.  Then $F$ has seven distinct quadratic extensions $K$, enumerated in Table~\ref{Table:Q2}. 
The isomorphism classes of nonassociative quaternion algebras with left nucleus $\extK$ are represented by $(\extK/F,\sigma,a)$ where $\sigma\in \Gal(\extK/F)$ is the nontrivial element and $a\in \mathcal{S}(K)$, where $\mathcal{S}(K)$ is given as follows:
\begin{enumerate}[(a)]
\item If $\extK=\Q_2(\sqrt{-3})$, then this extension is unramified and
$$
\mathcal{S}(K) = \{\sqrt{-3}, \;2\sqrt{-3},\; 1+s\sqrt{-3},\; 2+s\sqrt{-3} \mid s\in \Q_2^\times/\{\pm 1\}\}.
$$
\item If $\extK=\Q_2(\sqrt{\alpha})$ with $\alpha \in \{2,-6\}$, then this extension is ramified and $-1\in \normKFim$.  We have
$$
\mathcal{S}(K) = \{\sqrt{\alpha}, \;3\sqrt{\alpha}, \;1+s\sqrt{\alpha},\; 3+s\sqrt{\alpha} \mid s\in \Q_2^\times/\{\pm 1\}\}.
$$
\item If $\extK=\Q_2(\sqrt{\alpha})$ with $\alpha \in \{-1,-2,3,6\}$, then this extension is ramified and $-1\notin \normKFim$.  We have
$$
\mathcal{S}(K) = \{\sqrt{\alpha}, \;\pm1+s\sqrt{\alpha} \mid s\in F^\times/\{\pm 1\}\}.
$$
\end{enumerate}
\end{theorem}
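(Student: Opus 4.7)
The plan is to specialize Theorem~\ref{T:p2} to $F=\Q_2$. For each quadratic extension $K$ of $\Q_2$, I must compute three things: (i) a set of representatives of the square classes in $\Q_2^\times$, to enumerate all such $K$; (ii) a set $\mathcal{C}_K$ of coset representatives of $\Q_2^\times/\normKFim$; and (iii) whether $-1\in\normKFim$. Once these are in hand, the sets $\mathcal{S}(K)$ in each part follow by direct substitution.

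For the square classes, I would use the decomposition $\Q_2^\times\cong\{\pm 1\}\times(1+4\Z_2)\times\langle 2\rangle$. Since $(1+4\Z_2)^2=1+8\Z_2$ by Hensel's Lemma~\ref{le:Hensel}, we get $|\Q_2^\times/(\Q_2^\times)^2|=2\cdot 2\cdot 2=8$, matching $2^{e+2}$ with $e=1$. A mod-$8$ computation confirms that $\{\pm 1,\pm 3,\pm 2,\pm 6\}$ represents the seven nontrivial classes, giving the seven quadratic extensions listed in Table~\ref{Table:Q2}. The unique unramified one is $\Q_2(\sqrt{-3})$, because its residue field is $\F_4=\F_2(\omega)$ with $\omega=(-1+\sqrt{-3})/2$ a primitive cube root of unity. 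For this $K$, the norm map is surjective on units and doubles valuations, giving $\normKFim=\Z_2^\times\cdot\langle 4\rangle$, $\mathcal{C}_K=\{1,2\}$, and trivially $-1\in\normKFim$; substituting into Theorem~\ref{T:p2} then yields case~(a).

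For each of the six ramified extensions $K=\Q_2(\sqrt{\alpha})$, the norm form is $\normKF(x+y\sqrt{\alpha})=x^2-\alpha y^2$, so $-1\in\normKFim$ if and only if $-1=x^2-\alpha y^2$ has a solution $x,y\in\Q_2$. For $\alpha=2$, the identity $-1=1^2-2\cdot 1^2$ solves it; for $\alpha=-6$, since $-7\equiv 1\pmod 8$ lies in $(\Q_2^\times)^2$ by Hensel's Lemma, a square root $\sqrt{-7}\in\Q_2$ exists, and $-1=(\sqrt{-7})^2+6\cdot 1^2$. A separate mod-$8$ check shows $3\not\in\normKFim$, so $\mathcal{C}_K=\{1,3\}$, producing case~(b). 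For $\alpha\in\{-1,3,-2,6\}$, I would reduce $-1=x^2-\alpha y^2$ modulo $8$ (for example, $\alpha=-1$ forces $x^2+y^2\equiv 7\pmod 8$, which is impossible since squares mod $8$ lie in $\{0,1,4\}$), and then clear denominators by a valuation argument to rule out rational solutions. Thus $-1\not\in\normKFim$, and $\mathcal{C}_K=\{1,-1\}$ yields case~(c).

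The main obstacle will be case~(c): the mod-$8$ obstruction instantly rules out $\Z_2$-solutions, but one must then argue by homogeneity that a putative rational solution $(x,y)\in\Q_2^2$ of negative valuation can be rescaled to an integer solution, at the cost of multiplying the right-hand side by a power of $4$ (a unit square), which preserves the mod-$8$ obstruction and yields a contradiction. A more uniform alternative is to compute the Hilbert symbol $(-1,\alpha)_2$ directly for each $\alpha$, which organizes all six ramified cases into a single calculation at the price of invoking more local class field theory.
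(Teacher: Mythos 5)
Your proposal is correct and follows the same skeleton as the paper's proof: enumerate $\Q_2^\times/(\Q_2^\times)^2$ via Hensel's Lemma to get the seven quadratic extensions, determine $\mathcal{C}_K$ and whether $-1\in\normKFim$ for each, and then read off $\mathcal{S}(K)$ from Theorem~\ref{T:p2}. The one genuine difference is in how the norm-group data is obtained: the paper simply imports the images $\normKFim/(\Q_2^\times)^2$ from a case-by-case computation in Shimura's book, whereas you derive them by hand --- exhibiting explicit solutions of $x^2-\alpha y^2=-1$ (e.g.\ $1-2\cdot 1$ for $\alpha=2$, and $-7+6$ with $\sqrt{-7}\in\Q_2$ for $\alpha=-6$), and ruling out solutions for $\alpha\in\{-1,-2,3,6\}$ by a mod-$8$ obstruction combined with the rescaling argument you correctly flag as necessary for non-integral candidates (or, more uniformly, by Hilbert symbols). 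This makes your version self-contained where the paper's relies on a citation; the conclusions ($\mathcal{C}_K=\{1,2\}$ unramified, $\{1,3\}$ in case (b), $\{\pm 1\}$ in case (c), and the values of $-1\in\normKFim$) all agree with Table~\ref{Table:Q2}.
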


\begin{proof}
We record in Table~\ref{Table:Q2} several facts about the quadratic extensions of $\Q_2$, as follows.  Using Hensel's Lemma, we deduce that there are no solutions to $x^2=\alpha$ for $\alpha\in \{-1,-3,3\}$ but that every element of $1+4\Z_2$ is a square; moreover, no element of valuation equal to $1$ may have a square root in $F$.  We deduce that $\Q_2^\times/(\Q_2^\times)^2 = \{\pm 1, \pm 2, \pm 3, \pm 6\}$.  Thus the first column lists the distinct quadratic extensions of $\Q_2$, with the unramified extension (the one containing the cube roots of unity) being listed first.  The second column is taken from \cite[p34]{Shimura2010} where the image of the norm map was computed explicitly on a case-by-case basis.  In the third column we record a convenient choice of element $\gamma$ such that $\mathcal{C}_K=F^\times/\normKFim = \{1,\gamma\}$.

The theorem is now a direct application of Theorem~\ref{T:p2}.  To specify the cases of that theorem that apply, we explicitly record in the fourth column of Table~\ref{Table:Q2} whether or not $-1$ is in the image of the norm map, and in the final column put a label to the corresponding case of the statement of Theorem~\ref{T:Q2}.
\begin{table}[ht]
\caption{The distinct quadratic extensions of $\Q_2$}
\label{Table:Q2}
\begin{tabular}{|c|c|c|c|c|}
\hline $K=F(\sqrt{c})$ & $\normKFim/(F^\times)^2$ & $\mathcal{C}_K$ 
& $-1\in \normKFim$? & Case\\
\hline
\hline
$\Q_2(\sqrt{-3})$ & $\{\pm 1, \pm 3\}$ & $\{1,2\}$ & yes & (a)\\
$\Q_2(\sqrt{-6})$ & $\{\pm 2, \pm 6\}$ & $\{1,3\}$ & yes& (b)\\
$\Q_2(\sqrt{2})$ & $\{\pm 1, \pm 2\}$ & $\{1,3\}$ & yes& (b)\\
$\Q_2(\sqrt{-1})$ & $\{1,-2,-3,6\}$ & $\{\pm 1\}$ & no & (c)\\
$\Q_2(\sqrt{-2})$ & $\{1,2,3,6\}$ & $\{\pm 1\}$ & no&(c)\\
$\Q_2(\sqrt{3})$ & $\{1,-2-,3,6\}$ & $\{\pm 1\}$ & no&(c)\\
$\Q_2(\sqrt{6})$ & $\{1,-2,3,-6\}$ & $\{\pm 1\}$ & no&(c)\\
\hline
\end{tabular}
\end{table}
\end{proof}

\subsection{Case of Laurent series over a field of characteristic two}

Suppose now that we are in the last remaining case, that of characteristic two.

Note first that extensions of the form $F(\sqrt{c})$ for any $c\notin (F^\times)^2$ are (purely) inseparable. Instead, the distinct Galois quadratic extensions over a field $F$ of characteristic $2$ are obtained, by the Artin--Schreier Theorem \cite[Ch VI Thm 6.4, 8.3]{LangAlgebra}, as
$$
K = F(x)/( x^2+x+c)
$$
 where $c\in F$ ranges over the nontrivial cosets of the additive subgroup $\mathfrak{P}_2=\{z^2+z\mid z\in F\}$. Note that if $\alpha\in K$ is a root of $x^2+x+c$ then so is $\alpha+1$, so $K=F(\alpha)$.

In this section, we let $F$ be a local nonarchimedean field of characteristic two.  Then we have $F=\F_{2^f}(\!(t)\!)$, for some $f\in \N$.

While $F$ admits a unique quadratic unramified Galois extension, it has infinitely many distinct ramified Galois quadratic extensions. For example, since each element of $\mathfrak{P}_2 \smallsetminus\RR$ must have \emph{even} valuation, it follows that the elements of $\{t^{-2k-1}: k\in \N\}$, whose distinct differences have odd valuation, represent infinitely many distinct cosets of $\mathfrak{P}_2$.

The classification of the distinct nonassociative quaternion algebras over $F$ correspondingly assumes a different flavour.  For one, there will be infinitely many distinct choices for the left nucleus, corresponding to these various distinct quadratic Galois extensions.  On the other hand, the parametrization of the isomorphism classes corresponding to a fixed quadratic extension admits a simpler description, as follows.

\begin{theorem}\label{T:p=2quad}
Suppose $\extK=F(\alpha)$ is a separable quadratic extension of a local field $F$ of characteristic $2$.  Let $\gamma$ be a nontrivial element of $F^\times/\normKFim$ and  $\sigma$ the  element of $\Gal(\extK/F)$ such that $\sigma(\alpha)=\alpha+1$.  The distinct isomorphism classes of nonassociative quaternion algebras with left nucleus $\extK$ are represented by $(\extK/F,\sigma,a)$ where $a$ is chosen from the set $\mathcal{S}(K)$ given by
$$
\mathcal{S}(K)=\{ s + \alpha, \gamma(s+\alpha) \mid s\in F/\{0,1\}\}
$$
where $F/\{0,1\}$ denotes the quotient of the additive group of $F$ by the two-element prime field $\F_2$.
\end{theorem}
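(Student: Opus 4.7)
The plan is to apply the isomorphism criterion of Theorem~\ref{T:classifyisomorphism} directly, after using local class field theory to exploit that $F^\times/\normKFim$ has order exactly $2$, so that $F^\times = \normKFim \sqcup \gamma\normKFim$. Recall that $\alpha$ satisfies $\alpha^2+\alpha+c=0$ for some $c\in F$ representing a nontrivial coset of $\mathfrak{P}_2$, that $\sigma(\alpha)=\alpha+1$, and that every element of $K$ may be written uniquely as $b_0+b_1\alpha$ with $b_0,b_1\in F$. Throughout I will write $\sim$ for the equivalence relation of Theorem~\ref{T:classifyisomorphism}, so that $(K/F,\sigma,a)\cong (K/F,\sigma,b)$ iff $a\sim b$, i.e.\ $a\in b\normKFim\cup\sigma(b)\normKFim$.

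First I would establish \emph{surjectivity}: every $b\in K\setminus F$ is equivalent to some element of $\mathcal{S}(K)$. Since $b\notin F$, we have $b=b_0+b_1\alpha$ with $b_1\in F^\times$. Setting $s=b_0/b_1\in F$ gives $b = b_1(s+\alpha)$. By local class field theory (see Section~\ref{sec:p-adic fields}), either $b_1\in\normKFim$, in which case $b\in(s+\alpha)\normKFim$ and so $b\sim s+\alpha$; or $b_1\in\gamma\normKFim$, in which case $b\in \gamma(s+\alpha)\normKFim$ and so $b\sim \gamma(s+\alpha)$. Thus every class is represented by some $s+\alpha$ or $\gamma(s+\alpha)$ with $s\in F$.

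Next I would check \emph{distinctness}. The key invariant is that for $b=b_0+b_1\alpha\in K\setminus F$, the coset $b_1\normKFim\in F^\times/\normKFim$ depends only on the equivalence class of $b$: multiplication by $n\in\normKFim\subset F^\times$ scales $b_1$ within its coset, and applying $\sigma$ sends $b_0+b_1\alpha$ to $(b_0+b_1)+b_1\alpha$, so $b_1$ is unchanged. Since the $\alpha$-coefficients of $s+\alpha$ and $\gamma(s'+\alpha)$ represent the distinct cosets $1$ and $\gamma$, no element of the first type is equivalent to one of the second type. Within each type I would compute directly: suppose $s+\alpha\sim s'+\alpha$; then $s'+\alpha$ equals either $(s+\alpha)n$ or $\sigma(s+\alpha)n=((s+1)+\alpha)n$ for some $n\in\normKFim\subset F^\times$. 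Comparing $\alpha$-coefficients forces $n=1$, and then comparing constant terms yields $s'\in\{s,s+1\}$. The same argument (with an extra factor of $\gamma$) handles the $\gamma(s+\alpha)$ type. Hence the quotient $F/\{0,1\}$ in the definition of $\mathcal{S}(K)$ is exactly what is needed, and each element of $\mathcal{S}(K)$ represents a distinct isomorphism class.

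I do not expect a serious obstacle: once the identification $\sigma(\alpha)=\alpha+1$ and the $|F^\times/\normKFim|=2$ feature of local class field theory are in hand, the argument is essentially a normal-form computation, parallel in structure to (and actually simpler than) the odd-residual-characteristic case of Theorem~\ref{T:p2}, since additive Artin--Schreier representatives behave more uniformly than square roots. The only mildly subtle point is to verify that the $\alpha$-coefficient coset is a well-defined invariant of $\sim$, but this follows from the direct $\sigma$-computation above.
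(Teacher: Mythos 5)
Your proposal is correct and follows essentially the same route as the paper: reduce $b=b_0+b_1\alpha$ to a normal form by extracting the class of $b_1$ in $F^\times/\normKFim=\{1,\gamma\}$, then use $\sigma(\alpha)=\alpha+1$ to identify $s$ with $s+1$, yielding the quotient $F/\{0,1\}$. The only cosmetic difference is that the paper first obtains representatives $s+\gamma\alpha$ and then rescales by $\gamma$ at the end, whereas you work with $\gamma(s+\alpha)$ throughout; your explicit observation that the coset of the $\alpha$-coefficient is a $\sim$-invariant is implicit in the paper's uniqueness of $r\in\{1,\gamma\}$.
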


\begin{proof}
Let $m(x) = x^2+x+c$ be the minimal polynomial of $\alpha$.  Its other root is $\alpha+1$, and these are linearly independent over $F$.  Thus
$$
\{\alpha, \alpha+1\}
$$
forms a basis for $\extK/F$.  Let us evaluate the equivalence relation $\sim$ arising from Theorem~\ref{T:classifyisomorphism} on $K\smallsetminus F$. 
Write $a=a_0+a_1\alpha$; then $a_1\neq 0$ since $a\notin F$.  Thus  there exists a unique $r\in \{1,\gamma\}$ and $n\in \normKFim$ such that $a_1=rn$.  Setting $s=a_0n^{-1}$, it follows that $a\in (s+r\alpha)\normKFim$.

Since $\sigma(s+\alpha) = (s+1) + \alpha$ and $\sigma(s+\gamma \alpha) = (s+\gamma) + \gamma \alpha$, and since the characteristic of $F$ is $2$, it follows that the distinct isomorphism classes are parametrized by the set
$$
\{ s_1 + \alpha, s_\gamma +\gamma \alpha \mid s_1\in F/\{0,1\}, s_\gamma \in F/\{0,\gamma\}\}.
$$
As the map sending $a\in F$ to $a\gamma \in F$ is an additive group automorphism sending $\{0,1\}$ to $\{0,\gamma\}$, we deduce the final form of the parameter set $\mathcal{S}(K)$.
\end{proof}

\section{Nonassociative cyclic algebras of degree $m>2$}\label{sec:prime}

In this section, we begin with some results for nonassociative cyclic algebras of degree $m>2$, before specializing to the case that $m$ is prime (and $F$ contains a primitive $m$th root of unity), concluding with an application to the case of local nonarchimedean fields.

\subsection{Different generators of $\Gal(K/F)$ give nonisomorphic algebras} Let $F$ be an arbitrary field such that $F$ admits a cyclic Galois extension $K$ of degree $m>2$.
By Lemma~\ref{Lem:differentextensions}, the choice of field $K$ is an invariant of the isomorphism class.

In the case of \emph{associative} cyclic algebras, that is, when $a\in F^\times$, one has $(K/F,\sigma,a)\cong (K/F,\sigma^k,a^k)$ \cite[15.1 Corollary a]{Pierce1982}, that is, different choices of generator of the Galois group yield isomorphic algebras.

We show that, in stark contrast, nonassociative cyclic algebras  corresponding to distinct choices of generator $\sigma\in \Gal(K/F)$ are \emph{never} isomorphic.

\begin{theorem}\label{sigmadistinct}
Let  $F$ be an arbitrary field and let $m\geq 3$ be the degree of a  cyclic Galois extension $K/F$.  For any two distinct generators $\sigma_1 \neq \sigma_2$ of the Galois group $\Gal(K/F)$, and for any $a_1,a_2\in K\smallsetminus F$, we have
$$
(K/F,\sigma_1,a_1) \not\cong (K/F,\sigma_2,a_2).
$$
\end{theorem}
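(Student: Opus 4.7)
The plan is to derive a contradiction by assuming a hypothetical isomorphism $\phi\colon A_1=(K/F,\sigma_1,a_1)\to A_2=(K/F,\sigma_2,a_2)$ exists with $\sigma_1\neq\sigma_2$. First, since $K$ coincides with the left nucleus on both sides and nuclei are preserved by isomorphisms, $\phi$ restricts to an $F$-algebra automorphism of $K$, so $\phi|_K=\tau$ for some $\tau\in\Gal(K/F)$. Writing $\phi(t_1)=\sum_{s=0}^{m-1}b_s t_2^s$ and applying $\phi$ to the identity $t_1 k=\sigma_1(k)t_1$ for $k\in K$, I would extract the coefficient of $t_2^s$ on each side via \eqref{E:defmultiplication}; this yields, for every $s$ with $b_s\neq 0$, the identity $\sigma_2^s\tau=\tau\sigma_1$ in $\Gal(K/F)$.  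Since the Galois group is abelian, this reads $\sigma_2^s=\sigma_1$, and because $\sigma_2$ has order $m$ there is a \emph{unique} such $s\in\{0,\dots,m-1\}$, which I will call $s_0$.  Hence $\phi(t_1)=b\,t_2^{s_0}$ for some $b\in K^\times$, and $\gcd(s_0,m)=1$ since $\sigma_1=\sigma_2^{s_0}$ is itself a generator.  Crucially, the hypothesis $\sigma_1\neq \sigma_2$ forces $s_0\geq 2$.

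Next, I would set $T:=\phi(t_1)=b\,t_2^{s_0}$ and pick $n:=\lceil m/s_0\rceil +1$; a short arithmetic check gives $3\leq n\leq m$ from $s_0\geq 2$ and $m\geq 3$.  Since $(n-2)s_0<m$, the multiplication formula \eqref{E:defmultiplication} shows that for every $j\leq n-2$ the power $T^j$ involves no ``wrap-around'', hence is $K$-monomial in $t_2$ and unambiguously defined.  A direct regrouping then shows that every bracketing of $T^{n-1}$ evaluates to the same monomial
\[
T^{n-1} \;=\; \Bigl(\prod_{k=0}^{n-2}\sigma_1^k(b)\Bigr)\, a_2 \, t_2^{(n-1)s_0-m},
\]
where the single factor $a_2$ arises from the one wrap that occurs in passing from exponent $(n-2)s_0<m$ to $(n-1)s_0\geq m$.

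The core of the argument is to compare the two bracketings $T^{n-1}\cdot T$ and $T\cdot T^{n-1}$: by functoriality of $\phi$ applied to the $A_1$-identities $t_1\cdot t_1^{n-1}=t_1^{n-1}\cdot t_1=t_1^n$, they must be equal in $A_2$.  Direct computation shows that in the first product the factor $a_2$ remains untouched (it is only multiplied on the right by elements of $K$), whereas in the second, left-multiplication by $T=bt_2^{s_0}$ forces the Galois element $\sigma_2^{s_0}=\sigma_1$ to act on the entire right factor $T^{n-1}$, replacing $a_2$ by $\sigma_1(a_2)$.  The $b$-coefficients reassemble identically in both cases to $\prod_{k=0}^{n-1}\sigma_1^k(b)$, and the outer wrap condition $ns_0\geq 2m$ has the same truth value in both bracketings (since the total wrap count depends only on $ns_0\bmod m$).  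Consequently the two products differ by precisely the ratio $a_2/\sigma_1(a_2)$.  Since $\sigma_1$ generates $\Gal(K/F)$ and $a_2\in K\smallsetminus F=K\smallsetminus K^{\langle\sigma_1\rangle}$, we have $\sigma_1(a_2)\neq a_2$, contradicting the equality of the two products.

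The main obstacle is the careful bookkeeping in the last paragraph: I need to verify that the coefficient $\prod_{k=0}^{n-1}\sigma_1^k(b)$ really assembles in the same way in both bracketings, and that the outer wrap contributes matching counts of $a_2$, so that the discrepancy between $T^{n-1}\cdot T$ and $T\cdot T^{n-1}$ reduces to the single clean factor $a_2/\sigma_1(a_2)$ rather than being obscured by additional Galois twists.  A subsidiary point, which is what allows the comparison to be made in the first place, is the bracket-independence of $T^{n-1}$; this in turn rests on the elementary observation that the multiplication in \eqref{E:defmultiplication} is associative as long as no wrap occurs, so all intermediate computations used to form $T^{n-1}$ agree.
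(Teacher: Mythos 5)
Your proposal is correct and follows essentially the same route as the paper's proof: identify $\phi(t_1)$ as a $K$-multiple of $t_2^{s_0}$ with $\sigma_1=\sigma_2^{s_0}$ and $s_0\geq 2$ (the paper does this via the eigenspace characterization $Kt_2^s=\{u \mid uk=\sigma_2^s(k)u\}$, you via coefficient extraction from $t_1k=\sigma_1(k)t_1$), take the first power at which a wrap occurs (your $n-1$ is the paper's $\ell$), and compare the two orderings of the product with $\phi(t_1)$ to force $\sigma_1(a_2)=a_2$, contradicting $a_2\notin F$. The bookkeeping you flag as the main obstacle does work out exactly as you describe, matching the paper's displayed computation.
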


\begin{proof}
Suppose to the contrary that there exists an isomorphism $\varphi\colon (K/F,\sigma_1,a_1) \to (K/F,\sigma_2,a_2)$; this is an isomorphism of $F$-vector spaces satisfying $\varphi(uv)=\varphi(u)\varphi(v)$ for all $u,v\in (K/F,\sigma_1,a_1)$. In particular, $\varphi$ restricts to a field automorphism of $K$; since the Galois group is cyclic, and thus abelian, this implies $\varphi$ commutes with $\sigma_i$ on $K$, for $i\in \{1,2\}$.

By  Definition~\ref{D:cyclic},
$(K/F,\sigma_i,a_i)=\bigoplus_{s=0}^{m-1} Kt_i^s$ with an $F$-bilinear multiplication defined for $0\leq s,s' < m$ and $k,k'\in K$ by
$$
(kt_i^s)(k't_i^{s'}) = \begin{cases}
k\sigma_i^s(k')t_i^{s+s'} & \text{if $s+s'<m$;}\\
k\sigma_i^s(k')a_it_i^{s+s'-m} & \text{if $s+s'\geq m$.}
\end{cases}
$$
In particular, we infer that the left $K$-submodule $Kt_i^s$ can be characterized as
$$
Kt_i^s = \{u\in (K/F,\sigma_i,a_i) \mid \forall k\in K, uk = \sigma_i^s(k) u\}.
$$
Since $\sigma_1,\sigma_2$ both generate $\Gal(K/F)$, there is some $1<j<m$, with $(j,m)=1$, such that $\sigma_1 = \sigma_2^j$.  It follows that $\varphi(t_1)=kt_2^j$ for some $k\in K$.

Let $\ell$ be the least positive integer such that $\ell j>m$; since $\sigma_1\neq \sigma_2$, we have $\ell<m$.  For any index $s\geq 1$, set $\gamma_s(k) = k\sigma_2^j(k)\sigma_2^{2j}(k)\cdots \sigma_2^{(s-1)j}(k)$.  Then one can show by induction that
$$
\varphi(t^s) =
\begin{cases}
\gamma_s(k)t_2^{sj} &\text{if $1\leq s<\ell$;}\\
\gamma_\ell(k)a_2t_2^{\ell j - m} &\text{if $s=\ell$.}
\end{cases}
$$
Since $1\leq \ell<m$, we have
$$
t_1 \cdot t_1^\ell = t_1^\ell \cdot t_1 = \begin{cases}
t_1^{\ell+1} & \text{if $\ell+1<m$};\\
a_1 & \text{if $\ell+1=m$.}
\end{cases}
$$
However, we have
$$
\varphi(t_1^\ell)\varphi(t_1) =  \gamma_{\ell}(k) a_2 t_2^{\ell j-m}\cdot kt_2^j  =
\begin{cases}
\gamma_{\ell+1}(k)a_2t_2^{\ell j + j-m} & \text{if $(\ell+1)j<2m$;}\\
\gamma_{\ell+1}(k)a_2^2t_2^{\ell j + j-2m} & \text{if $2m\leq (\ell+1)j$}
\end{cases}
$$
whereas
$$
\varphi(t_1)\varphi(t_1^\ell) = kt_2^j \cdot \gamma_{\ell}(k) a_2 t_2^{\ell j-m} =
\begin{cases}
\gamma_{\ell+1}(k)\sigma_2^j(a_2)t_2^{\ell j + j-m} & \text{if $(\ell+1)j<2m$;}\\
\gamma_{\ell+1}(k)\sigma_2^j(a_2)a_2t_2^{\ell j + j-2m} & \text{if $2m\leq (\ell+1)j$}.
\end{cases}
$$
It follows that $\varphi$ is well-defined only if $\sigma_1(a_2)=\sigma_2^j(a_2)=a_2$, meaning that $a_2$ lies in the fixed field of $\sigma_1$. But this is impossible: $\sigma_1$ generates the cyclic Galois group, and $a_2\notin F$, by construction. 
\end{proof}

\subsection{An explicit parametrization of isomorphism classes}

Now suppose that  $m$ is an odd prime and that
 $F$ contains a primitive $m$th root of unity. If $F$ has prime characteristic $p$ then this condition implies that $gcd(m,p)=1$. Kummer theory \cite[Ch VI Thm 6.2, 8.2]{LangAlgebra} ensures that the Galois extensions of $F$ of degree $m$ are in bijection with the subgroups of  $F^\times/(F^\times)^m$ of order $m$.

By Theorem~\ref{sigmadistinct}, we may fix a choice of cyclic field extension $K/F$ of degree $m$ and a choice of generator $\sigma$ of $\Gal(K/F)$.  We wish to classify the algebras $(K/F,\sigma,a)$, as $a$ runs over $K\smallsetminus F$, up to isomorphism.

Let $\zeta\in F$ be a primitive $m$th root of unity.
By Kummer theory, $K$ is the splitting field of some polynomial $x^m-b$, with $b\notin (F^\times)^m$.  Thus we may choose $\beta\in K$ to be a root of $x^m-b$ for which $\sigma(\beta) = \zeta \beta$.
Then $$
\{1, \beta, \beta^2, \ldots,\beta^{m-1}\}
$$
is an $F$-basis for $K$ with the property that $\sigma(\beta^k)=\zeta^k\beta^k$ for all $0\leq k <m$.

We first prove that the isomorphism classes of cyclic algebras $(K/F,\sigma,a)$ admit a coarse partition into $2^{m-1}$ subsets with respect to this choice of basis, inspired by the classification of quaternion algebras in Theorem~\ref{T:p2}.

\begin{defn}
Let $\mathcal{I} = \mathcal{P}(\{0, 1, \ldots, m-1\}) \smallsetminus \{\{0\},\emptyset\}$ be the set of nonempty subsets of $\{0,1, \ldots,m-1\}$, excluding the set $\{0\}$.  For each $I \in \mathcal{I}$, define
$$
K(I) = \{ \sum_{i=0}^{m-1} a_i \beta^i \mid \forall i\in I, a_i\in F^\times, \forall i\notin I, a_i=0\} \subset K\smallsetminus F.
$$
\end{defn}

Thus for example if $m=3$, $K(\{1,2\}) = \{ a_1\beta + a_2\beta^2 \mid a_1,a_2 \in F^\times\}$.   It follows directly that
$$
K \smallsetminus F = \bigcup_{I\in \mathcal{I}} K(I),
$$
and that these sets $K(I)$ are invariant both under the action of $\sigma$, and under multiplication by elements of $F^\times$. 
Applying Theorem~\ref{T:classifyisomorphism} gives the following lemma.

\begin{lemma}\label{IJdistinct}
Suppose $I, J\in \mathcal{I}$ are distinct.  Then for any $a\in K(I)$, $b\in K(J)$, we have
$$
(K/F,\sigma,a) \not\cong (K/F,\sigma,b).
$$
\end{lemma}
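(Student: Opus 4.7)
The plan is to invoke Theorem~\ref{T:classifyisomorphism} and then combine it with the two stability properties of the sets $K(I)$ already noted in the excerpt. Concretely, Theorem~\ref{T:classifyisomorphism} asserts that $(K/F,\sigma,a)\cong (K/F,\sigma,b)$ if and only if there exist $k\in \Z$ and $c\in K^\times$ such that
\[
a = \sigma^k(b)\,\normKF(c).
\]
Since $\normKF(c)\in F^\times$, proving the lemma amounts to showing that the operation $b\mapsto \sigma^k(b)\,\lambda$ with $\lambda\in F^\times$ cannot move an element out of the set $K(J)$ in which $b$ lies, and that the $K(I)$ partition $K\smallsetminus F$.

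First I would record that $\{K(I):I\in\mathcal{I}\}$ is a (disjoint) partition of $K\smallsetminus F$, which is immediate from the definition in terms of the support of the coordinates in the $F$-basis $\{1,\beta,\ldots,\beta^{m-1}\}$. Next I would verify the two invariance properties: for $b=\sum_{i\in J}b_i\beta^i\in K(J)$, one has
\[
\sigma^k(b)=\sum_{i\in J} b_i \zeta^{ki}\beta^i,
\]
and since $\zeta^{ki}\in F^\times$ the coefficient of $\beta^i$ remains nonzero for $i\in J$ and zero for $i\notin J$; hence $\sigma^k(b)\in K(J)$. Likewise, for $\lambda\in F^\times$ we have $\lambda b=\sum_{i\in J}(\lambda b_i)\beta^i$, so $\lambda b\in K(J)$ as well.

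Combining these, $\sigma^k(b)\normKF(c)\in K(J)$. If $a\in K(I)$ with $I\neq J$, then the disjointness $K(I)\cap K(J)=\emptyset$ forces $a\neq \sigma^k(b)\normKF(c)$ for every choice of $k$ and $c$, so no $\tau\in\Gal(K/F)$ satisfies $a\in\tau(b)\normKFim$. By Theorem~\ref{T:classifyisomorphism}, the algebras $(K/F,\sigma,a)$ and $(K/F,\sigma,b)$ are not isomorphic.

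The proof is essentially bookkeeping: there is no real obstacle, because all three ingredients (the partition, the $\sigma$-invariance coming from $\sigma(\beta^i)=\zeta^i\beta^i$, and the $F^\times$-invariance by $F$-linearity) have already been set up. The only small thing to be careful about is that one is applying Theorem~\ref{T:classifyisomorphism} with $\tau$ ranging over all of $\Gal(K/F)=\langle\sigma\rangle$, which is handled uniformly since $\sigma^k$ preserves $K(J)$ for every $k$.
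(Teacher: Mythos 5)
Your proof is correct and follows exactly the route the paper intends: it uses the partition of $K\smallsetminus F$ by the supports $K(I)$, their invariance under $\sigma$ (via $\sigma(\beta^i)=\zeta^i\beta^i$) and under scaling by $F^\times$, and then applies Theorem~\ref{T:classifyisomorphism}. The paper leaves these verifications implicit (it states the invariance properties just before the lemma and says the result follows), so your write-up is simply a fully spelled-out version of the same argument.
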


To further refine these partitions, we require the following definition.

\begin{defn}
Let $I\in \mathcal{I}$ be such that $\vert I \vert =k+1\geq 2$. Write $F^{[\times k]}$ for the $k$-fold direct product $F^\times\times F^\times \times \cdots \times F^\times$.  If the elements of $I$ are $i_0<i_1<\cdots <i_k$ then let
$$
\Delta_I = \{ (\zeta^{s(i_1-i_0)}, \zeta^{s(i_2-i_0)}, \ldots, \zeta^{s(i_k-i_0)}) \in F^{[\times k]} : 1\leq s\leq m\}
$$
where $\zeta$ is any primitive $m$th root of unity.  Write $F^{[\times k]}/\Delta_I$ for any fixed choice of representatives for these cosets. For any fixed choice of $a_{i_0}\in F^\times$, define
$$
K(I;a_{i_0}) = \{ a_{i_0}\beta^{i_0} + \sum_{j=1}^{k}a_{i_j}\beta^{i_j} : (a_{i_1},a_{i_2}, \ldots, a_{i_k})\in F^{[\times k]}/\Delta_I \}.
$$
Finally, set $K(I;a_{i_0})=\{a_{i_0}\beta^{i_0}\}$ when $I=\{i_0\}\in \mathcal{I}$.
\end{defn}

We may now state our main classification theorem for cyclic extensions of odd prime degree.

\begin{theorem} \label{Main:general}
Suppose $m$ is an odd prime and $F$ contains a  primitive $m$th root of unity $\zeta$.  Then the distinct isomorphism classes of nonassociative cyclic algebras of degree $m$ over $F$ are represented by
$$
(K/F,\sigma,a)
$$
where:
\begin{itemize}
\item $K$ is one of the distinct cyclic Galois field extensions of $F$ of degree $m$, and $\mathcal{C}_K$ denotes a set of representatives of $F^\times/\normKFim$;
\item $\sigma$ is a generator of $\Gal(K/F)$; and
\item $a \in \mathcal{S}(K) \subset K\smallsetminus F$
\end{itemize}
where $\mathcal{S}(K)$ is defined as follows.

If $\zeta\in \normKFim$, then we take
$$
  \mathcal{S}(K)=\bigcup_{I\in \mathcal{I},a_{i_0}\in \mathcal{C}_K} K(I;a_{i_0}).
  $$
whereas otherwise, we may take $\mathcal{C}_K = \mu_m$ and then
$$
\mathcal{S}(K) = \bigcup_{I\in \mathcal{I}:i_0=0, a_{0}\in \mu_m}  K(I;a_{0}) \;\cup\; \bigcup_{I\in \mathcal{I}:i_0>0} \{1+ \sum_{j=1}^{|I|-1}a_{i_j}\beta^{i_j} : \forall j, a_{i_j}\in F^\times\}.
$$
\end{theorem}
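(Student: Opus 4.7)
The plan is to reduce the problem to an orbit analysis on each piece $K(I)$, and then normalize the leading coordinate in three cases. First I would combine Lemma~\ref{Lem:differentextensions}, Theorem~\ref{sigmadistinct}, and Theorem~\ref{T:classifyisomorphism}: since the field $K$ and the generator $\sigma$ are invariants of the isomorphism class, it suffices, for fixed $K$ and $\sigma$, to enumerate the orbits on $K\smallsetminus F$ of the equivalence $a\sim b\iff a=\sigma^s(b)N$ for some $s\in\Z/m\Z$ and $N\in\normKFim$. In the basis $\{1,\beta,\ldots,\beta^{m-1}\}$ the action of $\sigma$ is diagonal with eigenvalues $\zeta^i$ and scaling by $N\in F^\times$ is uniform, so both preserve the support of an element; hence by Lemma~\ref{IJdistinct} the orbits are confined within each $K(I)$, and it suffices to enumerate orbits inside a single $K(I)$.

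Write $I=\{i_0<i_1<\cdots<i_k\}$ and $a=\sum_j a_{i_j}\beta^{i_j}$, so the orbit of $a$ consists of all tuples $\sum_j N\zeta^{si_j}a_{i_j}\beta^{i_j}$. My strategy is to normalize the leading coordinate $a_{i_0}$ and then track the residual freedom on the remaining coordinates. When $\zeta\in\normKFim$, every $\zeta^{si_0}$ lies in $\normKFim$, so the leading coset is $a_{i_0}\normKFim$ independently of $s$; I would normalize to the unique $c_{i_0}\in\mathcal{C}_K$ representing this coset, which forces $N$ as a function of $s$, and show that the resulting action on the remaining coordinates is exactly the $\Delta_I$-action (after rescaling by $c_{i_0}/a_{i_0}$), yielding the parameter set $K(I;c_{i_0})$. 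When $\zeta\notin\normKFim$ and $i_0=0$, the $\sigma$-action fixes the leading coordinate entirely, so only $N$ affects it, and the analysis is the same after taking $\mathcal{C}_K=\mu_m$. When $\zeta\notin\normKFim$ and $i_0>0$, the primality of $m$ gives $\gcd(i_0,m)=1$, so $\{\zeta^{si_0}:s\in\Z/m\Z\}=\mu_m$ and the union of leading cosets $\bigcup_s \zeta^{si_0}a_{i_0}\normKFim$ equals $a_{i_0}\mu_m\normKFim=F^\times$; a unique $(s,N)$ normalizes the leading coordinate to $1$, which exhausts all freedom, and the remaining coordinates become arbitrary elements of $F^\times$, yielding the representatives $1+\sum_j a_{i_j}\beta^{i_j}$.

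To finish I would verify that distinct representatives are pairwise inequivalent by reversing the normalization: applied to two candidates, the equation $b=N\sigma^s(a)$ forces the equality of the leading coordinates (since distinct elements of $\mathcal{C}_K$ lie in distinct $\normKFim$-cosets) and then constrains the remaining coordinates modulo $\Delta_I$ in the first two cases or on the nose in the third. The main obstacle I anticipate is justifying the assertion that $\mu_m$ is a complete set of coset representatives for $F^\times/\normKFim$ when $\zeta\notin\normKFim$: injectivity $\mu_m\hookrightarrow F^\times/\normKFim$ is immediate from the primality of $m$, but surjectivity, equivalent to $F^\times=\mu_m\normKFim$, is a genuine hypothesis on $F$. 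It follows from local class field theory when $F$ is local (since then $|F^\times/\normKFim|=m$), but in greater generality requires a separate, probably cohomological, argument.
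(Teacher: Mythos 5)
Your proposal is correct and follows essentially the same route as the paper: reduce via Lemma~\ref{Lem:differentextensions}, Theorem~\ref{sigmadistinct} and Lemma~\ref{IJdistinct} to an orbit count inside each $K(I)$ under $a\mapsto N\sigma^s(a)$, normalize the leading coefficient, and observe that the residual freedom on the remaining coordinates is exactly the $\Delta_I$-action (with the three cases split exactly as you describe). The one point where you go beyond the paper is your closing caveat, and it is well taken: the paper's proof asserts that $\mu_m\cap\normKFim=\{1\}$ forces $\mu_m$ to be a \emph{complete} set of representatives of $F^\times/\normKFim$, which presumes $[F^\times:\normKFim]=m$ (equivalently $F^\times=\mu_m\normKFim$); this holds for the local nonarchimedean fields to which the theorem is subsequently applied, but is not automatic for an arbitrary $F$ containing $\mu_m$, so the second case of the statement should be read with that hypothesis (or with $\mu_m$ replaced by a genuine transversal of $F^\times/\normKFim$ containing it, and the $i_0>0$ representatives normalized only up to $\mu_m\normKFim$). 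Flagging this is a useful contribution rather than a defect of your argument.
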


\begin{proof}
By Theorem~\ref{sigmadistinct} and Lemma~\ref{IJdistinct}, it suffices to partition $K(I)$ into equivalence classes under $\sim$ for each fixed $K/F$, $\sigma$ and $I\subset \mathcal{I}$. 

Let $\mathcal{C}_K$ be a fixed set of representatives for $F^\times/\normKFim$.  When $\zeta \notin \normKFim$, it follows that  $\mu_m\cap \normKFim = \{1\}$ and thus this $m$-element group must represent the quotient, allowing us to set $\mathcal{C}_K=\mu_m$.

Let $a\in K(I)$ and let $i_0<i_1<\cdots<i_k$ be the distinct elements of $I$.  Then
$a_{i_0} = cn$  for some unique $c\in \mathcal{C}_K$ and some $n\in \normKFim$.  Consequently, scaling $a$ by $n^{-1}$
gives $$
a \sim c\beta^{i_0} + \sum_{j=1}^{k}a_{i_j}\beta^{i_j}
$$
for some $a_{i_j}\in F^\times$ and $c\in \mathcal{C}_K$.
Furthermore, for any $s\in \Z$, we have
\begin{equation}\label{scalinggen}
\sigma^s\left(c\beta^{i_0}+ \sum_{j=1}^{k}a_{i_j}\beta^{i_j}\right)
= c\zeta^{si_0}\beta^{i_0} + \sum_{j=1}^{k}a_{i_j}\zeta^{si_j}\beta^{i_j}.
\end{equation}
If $\zeta\in \normKFim$, then we may scale this expression by $\zeta^{-si_0}\in \normKFim$ to produce an equivalent element of $K(I)$. It follows that the equivalence classes are parametrized by
$$
\{c\beta^{i_0} + \sum_{j=1}^{k}a_{i_j}\beta^{i_j} : (a_{i_1},\ldots, a_{i_k})\in F^{[\times k]}/\Delta_I\} = K(I;c),
$$
as $c$ varies over $\mathcal{C}_K$, as required.

If instead we have $\mathcal{C}_K=\mu_m$, then there are two cases.  If $i_0=0$, then the relations induced by \eqref{scalinggen} imply directly, as above, that every element $a\in K(I)$ is equivalent to a unique element of the set $K(I;c)$.  However, if $i_0>0$, then as $s$ varies over $\{1,2, \cdots, m\}$, the coefficient of $\beta^{i_0}$ in \eqref{scalinggen} varies over $\mathcal{C}_K$.  Consequently, every $a\in K(I)$ is equivalent to one for which the first nonzero coefficient $c$ is equal to $1$, and no two distinct elements of this form will be equivalent.  Taken together, this gives the set $\mathcal{S}(K)$ in this second case.
\end{proof}

\subsection{Application to local nonarchimedean fields}
From now on, let $F$ be again a local nonarchimedean field of residual characteristic $p$ and residue field of order $q$.  Let $m$ be  an odd prime distinct from $p$; then all extensions of degree $m$ of $F$ are tamely ramified.  Let us furthermore assume that $F$ contains a primitive $m$th root of unity, or equivalently, that $m|(q-1)$, where $q = \vert \kappa \vert$ is the order of the residue field of $F$.

Since $F^\times/(F^\times)^m \cong \Z/m\Z \times \Z/m\Z$, Kummer theory implies that we have precisely $m+1$ distinct Galois extensions of degree $m$, corresponding to all order-$m$ subgroups, and consequently $m+1$ collections of isomorphism classes, one for each such extension.

We now make the parametrization given in Theorem~\ref{Main:general} wholly explicit in this setting, by defining a set $\mathcal{C}_K$ of representatives of $F^\times/\normKFim$ for each degree $m$ extension of $F$, and determining the conditions under which  $\mu_m\subset \normKFim$.

\begin{proposition}\label{le:explicit}
Let $K$ be a cyclic Galois field extension of degree $m$ of a local nonarchimedean field $F$, such that $\mu_m\subset F$ and $m\neq p$.  Let $q$ denote the order of the residue field $\kappa$ of $F$.    Then $F^\times/\normKFim$ is represented by
$$
\mathcal{C}_K=\begin{cases}
\{1, \varpi, \varpi^2, \cdots, \varpi^{m-1}\} & \text{if $K/F$ is unramified;}\\
\mu_m(F) & \text{if $K/F$ is ramified and $m^2\nmid (q-1)$;}\\
\{1,\proot, \proot^2, \cdots, \proot^{m-1}\} & \text{if $K/F$ is ramified and $m^2|(q-1)$},
\end{cases}
$$
where $\proot\in \mu_{q-1}(F)$ is any choice of primitive $(q-1)$th root of unity in $F$.
\end{proposition}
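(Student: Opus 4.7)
The plan is to handle the unramified and ramified cases separately, using the structural descriptions of $F^\times/\normKFim$ recalled in Section~\ref{sec:p-adic fields}. In both cases, local class field theory guarantees this quotient has order exactly $m$, so it suffices to exhibit $m$ explicit elements of $F^\times$ whose images are pairwise distinct modulo $\normKFim$.

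In the unramified case, the paper's preceding discussion already shows that the restriction $\normKF\colon\RR_K^\times\to\RR^\times$ is surjective, so the quotient $F^\times/\normKFim$ is generated by the class of $\varpi$. Since this quotient has order $m$, the powers $\{1,\varpi,\ldots,\varpi^{m-1}\}$ must be pairwise distinct representatives, giving the first case immediately.

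In the ramified case, because $m$ is prime and different from $p$, any degree-$m$ extension $K/F$ is automatically totally and tamely ramified, and Section~\ref{sec:p-adic fields} gives $F^\times/\normKFim\cong \RR^\times/(\RR^\times)^m\cong \kappa^\times/(\kappa^\times)^m$. My first step will be to double check this identification by verifying, via Hensel's Lemma (Lemma~\ref{le:Hensel}) applied to $f(x)=x^m-u$ with base point $1$, that $1+\PP\subset(\RR^\times)^m$; here one uses crucially that $m\in\RR^\times$, since $m\neq p$. Combined with the decomposition \eqref{E:productform} and the Teichm\"uller lifts of Example~\ref{eg:Teichmuller}, this reduces the analysis to the finite cyclic quotient $\mu_{q-1}/\mu_{q-1}^m$, which is cyclic of order $m$ because $m\mid(q-1)$.

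The main step is then a clean divisibility analysis. Writing $q-1=mk$, the subgroup $\mu_m(F)$ lifts to $\langle\proot^k\rangle\subset\mu_{q-1}$, and the image of $\proot^k$ in $\mu_{q-1}/\mu_{q-1}^m\cong\Z/m\Z$ corresponds to the class of $k \bmod m$. When $m^2\nmid(q-1)$, we have $\gcd(k,m)=1$, so $\proot^k$ generates the quotient, and the $m$ powers of $\proot^k$---namely the elements of $\mu_m(F)$---are a complete set of representatives. When $m^2\mid(q-1)$, instead $m\mid k$, so $\proot^k$ and hence all of $\mu_m(F)$ lies in $\mu_{q-1}^m$; in this case $\proot$ itself has class $1\in\Z/m\Z$, and so $\{1,\proot,\proot^2,\ldots,\proot^{m-1}\}$ is the natural set of coset representatives. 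The only subtle point is keeping careful track of where $\mu_m(F)$ sits inside $\mu_{q-1}$ modulo $m$th powers, which is exactly what the $m^2$-divisibility dichotomy is designed to detect; no genuine obstacle should arise.
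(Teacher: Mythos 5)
Your proof is correct and follows essentially the same route as the paper's: reduce to $\kappa^\times/(\kappa^\times)^m$ via Hensel's Lemma and the Teichm\"uller lifts, use local class field theory for the order of the quotient, take powers of $\varpi$ in the unramified case, and resolve the ramified case by deciding whether $\mu_m(F)$ lands inside the $m$th powers. Your explicit bookkeeping with $q-1=mk$ and the class of $\xi^k$ in $\Z/m\Z$ is just a more detailed version of the paper's observation that a primitive $m$th root of unity is an $m$th power exactly when $m^2\mid(q-1)$.
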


\begin{proof}
Since $m\neq p$, Hensel's Lemma implies that the map $x\mapsto x^m$ is a bijection on $1+\PP$, and that $\mu_{q-1}(F)\cong \kappa^\times$ as in Example~\ref{eg:Teichmuller}. 

Thus via the map sending a pair $(\overline{a},\ell)\in \kappa^\times\times\Z$ to $a\varpi^\ell$, we have $\kappa^\times/(\kappa^\times)^m \times \Z/m\Z \cong F^\times/(F^\times)^m$.
Now $(F^\times)^m\subset \normKFim \subset F^\times$ and $\vert F^\times/\normKFim\vert = m$ by local class field theory.  When $K/F$ is unramified, the norm map surjects onto $\RR^\times$, so we may take our representatives $\mathcal{C}_K$ from powers of $\varpi$.  When $K/F$ is (totally) ramified, we instead have $\normKFim\cap \RR^\times  =(\RR^\times)^m$ so is suffices to identify a set of representatives from among the Teichm\"uller lifts of elements of $\kappa^\times/(\kappa^\times)^m$.

If $m^2\nmid(q-1)$, then no primitive $m$th root of unity can be an $m$th power, and thus we may take $\mathcal{C}_K=\mu_m$ as a preferred set of representatives.  Otherwise, one may take the first $m$ powers of any primitive element of $\mu_{q-1}(F) \cong \kappa^\times$ (or indeed, of any element whose order $k$ satisfies $mk \nmid(q-1)$).
\end{proof}

\subsection{An explicit example:  nonassociative cyclic algebras of degree $3$}\label{sec:degree3}

In this section, we specialize the results of the preceding section to degree $3$, to better illustrate the combinatorial and explicit nature of the parametrization.

\begin{theorem}\label{thm:mainII}
Let $F$ be a local nonarchimedean field of residual characteristic different from $3$ such that $F$ contains a primitive cube root of unity $\zeta$.  Write $\mu_3=\langle \zeta \rangle$ and set $\Delta_3 = \{ (a,a^{-1})\in F^\times\times F^\times | a\in \mu_3\}$.
Then the distinct isomorphism classes of nonassociative cyclic algebras of degree $3$ over $F$ are represented by $(K/F,\sigma,a)$  where $\sigma$ is a nontrivial element of $\Gal(K/F)$ and $K$ and $a\in \mathcal{S}(K)$ are determined as follows:
\begin{enumerate}[(a)]
\item $K=L_3$ is the unique unramified extension of $F$ and
\begin{align*}
\mathcal{S}(K) = \{r\beta, &r\beta^2, r\beta+s\beta^2, r+s\beta, r+s\beta^2, r+s_1\beta+s_2\beta^2 \; \vert \\
 &r\in \{1,\varpi,\varpi^2\}, s\in F^\times/\mu_3, (s_1,s_2)\in (F^\times\times F^\times)/\Delta_3\}.
\end{align*}
\item $K=F(\beta)$ is one of the three ramified extensions, where $\beta^3=u\varpi$ for some $u\in \RR^\times/(\RR^\times)^3$, and
\begin{enumerate}[(a)]
\item if $q\not\equiv 1 \mod 9$, then
$$
\mathcal{S}(K)= \{r+s_1\beta + s_2\beta^2 , \beta+s \beta^2, \beta^2 | r\in \mu_3, s\in F, (s_1,s_2)\in  (F^\times\times F^\times)/\Delta_3\};
$$
\item if $q\equiv 1\mod 9$, then let $F^\times/\normKFim = \RR^\times/(\RR^\times)^3=\{1,\gamma,\gamma^2\}$ and set
\begin{align*}
\mathcal{S}(K) = \{r\beta, &r\beta^2, r\beta+s\beta^2, r+s\beta, r+s\beta^2, r+s_1\beta+s_2\beta^2 \; \vert \\
 &r\in \{1,\gamma,\gamma^2\}, s\in F^\times/\mu_3, (s_1,s_2)\in (F^\times\times F^\times)/\Delta_3\}.
\end{align*}
\end{enumerate}
\end{enumerate}
\end{theorem}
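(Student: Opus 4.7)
The plan is to derive Theorem~\ref{thm:mainII} as a direct specialization of Theorem~\ref{Main:general} to $m=3$, using Proposition~\ref{le:explicit} to pin down the representatives $\mathcal{C}_K$ of $F^\times/\normKFim$ in each case. First I would enumerate the four cyclic Galois extensions of $F$ of degree $3$, which exist by Kummer theory since $\mu_3\subset F$: the unique unramified extension $L_3$, and the three totally tamely ramified extensions $F(\sqrt[3]{u\varpi})$ as $u$ varies over the $3$-element set $\RR^\times/(\RR^\times)^3$. Proposition~\ref{le:explicit} then yields $\mathcal{C}_K=\{1,\varpi,\varpi^2\}$ in the unramified case, and either $\mu_3$ or $\{1,\gamma,\gamma^2\}$ in the ramified case, according as $9\nmid q-1$ or $9\mid q-1$.

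Second, I would determine when $\zeta\in\normKFim$. This is automatic in the unramified case since $\mu_3$ lifts by Hensel's Lemma into $\mu_{q-1}\subset\RR^\times\subset\normKFim$. In the ramified case, local class field theory gives $\normKFim\cap\RR^\times=(\RR^\times)^3$; under the isomorphism $\RR^\times/(\RR^\times)^3\cong\kappa^\times/(\kappa^\times)^3$ one reduces to asking whether $\mu_3$ is contained in $(\kappa^\times)^3$, and since $\kappa^\times$ is cyclic of order $q-1$ with $3\mid q-1$, this holds exactly when $9\mid q-1$. Thus Theorem~\ref{Main:general}'s uniform formula for $\mathcal{S}(K)$ applies in the unramified case and in the ramified case with $q\equiv 1\pmod 9$, while the split formula applies when $q\not\equiv 1\pmod 9$. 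Next I would compute $\Delta_I$ for each $I\in\mathcal{I}$: for $m=3$ there are six such subsets, and when $|I|=2$ the exponent difference $i_1-i_0\in\{1,2\}$ is coprime to $3$, so $\Delta_I=\mu_3$, while for $I=\{0,1,2\}$ a direct computation gives $\Delta_I=\{(\zeta^s,\zeta^{2s}):1\le s\le 3\}=\Delta_3$.

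Finally, I would assemble the pieces $K(I;c)$ over admissible $c\in\mathcal{C}_K$. In the unramified case and the ramified case with $q\equiv 1\pmod 9$, the six subsets $I$ contribute the six listed families: singleton terms $r\beta$ and $r\beta^2$ (from $|I|=1$), pairs $r\beta+s\beta^2$, $r+s\beta$, $r+s\beta^2$ with $s\in F^\times/\mu_3$ (from $|I|=2$), and triples $r+s_1\beta+s_2\beta^2$ with $(s_1,s_2)\in(F^\times\times F^\times)/\Delta_3$ (from $I=\{0,1,2\}$). For the case $q\not\equiv 1\pmod 9$, the split formula separates the subsets $I$ with $0\in I$ (where scaling by $\normKFim$ only places $c$ in the coset $\mathcal{C}_K=\mu_3$) from those with $0\notin I$ (where the $\sigma^s$-action normalizes the leading coefficient to $1$), yielding the more compact list. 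The main obstacle is the bookkeeping in this last case, where one must carefully track the distinct roles played by $\normKFim$-scaling and the $\sigma^s$-action; the remainder is essentially mechanical combinatorial verification.
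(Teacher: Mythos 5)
Your proposal is correct and follows essentially the same route as the paper: a direct specialization of Theorem~\ref{Main:general} using Proposition~\ref{le:explicit} to fix $\mathcal{C}_K$, the criterion $9\mid(q-1)$ (equivalently $\zeta\in(F^\times)^3$) to decide whether $\zeta\in\normKFim$ in the ramified case, and the computation of $\mathcal{I}$, $\Delta_I$ and the sets $K(I;c)$ for $m=3$. The only difference is that you spell out the class-field-theoretic justification for the $9\mid(q-1)$ criterion a little more explicitly than the paper does, which is a welcome addition rather than a deviation.
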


\begin{proof}
Note that $\mu_3\subset \RR^\times$ implies that $3$ divides $q-1$ or $q\equiv 1\mod 3$, where $q$ is the order of the residue field of $F$.  Similarly, $\zeta$ is itself a cube if and only if $F^\times$ contains a primitive $9$th root of unity, which is the condition that $9|(q-1)$.
Here, $\mathcal{I} = \{ \{1\}, \{2\}, \{0,1\}, \{1,2\}, \{0,2\}, \{0,1,2\}\}$.  When $I=\{i\} \in \mathcal{I}$, we have $K(I,r)=\{r\beta^i\}$; when $I=\{i_0<i_1\}\in \mathcal{I}$, we have $K(I,r) = \{r\beta^{i_0} + s\beta^{i_1} : s\in F^\times/\mu_3\}$; and when $I=\{0,1,2\}$, we have $K(I,r) = \{r+s_1\beta+s_2\beta^2:(s_1,s_2)\in \Delta_{\{0,1,2\}}=\Delta_3\}.$
The statement now from Theorem~\ref{Main:general}.
\end{proof}

\section{Nonassociative cyclic algebras of degree four}\label{sec:degree4}

When the degree of the nonassociative cyclic algebra is not prime, its structure becomes more interesting.
Let $K/F$ be a cyclic Galois field extension of degree $m$ with Galois group $G={\rm Gal}(K/F)=\langle \sigma\rangle$ and $a\in K\smallsetminus F$.

 When $m$ is not prime, then there is a nonassociative subalgebra of $(K/F,\sigma,a)$ associated to each $1<s<m$ such that $(s,m)\neq 1$.  Namely, set $E = {\rm Fix}(\sigma^{s})$ to be the subfield of $K$ fixed by $\sigma^s$; then $(K/E,\sigma^s,a)$ is a cyclic algebra over $E$, that is nonassociative if $a\notin E$, and that is an $F$-subalgebra of $(K/F,\sigma,a)$.

In particular, if we let $H=\{\tau\in G\,|\,\tau(a)=a \}$ be the subgroup of the (cyclic) Galois group fixing $a$, and let $E= {\rm Fix}(H)$, then $H=\sigma^s$ for some divisor $s$ of $m$ and by \cite[Theorem 1]{P24}we have
 $${\rm Nuc}_r((K/F,\sigma, a))=(K/E,\sigma^s, a),$$
where this latter is now an associative cyclic algebra of degree $r$ over $E={\rm Fix}(\sigma^s)$ (and hence independent of the choice of generator $\sigma^s$).

 \color{black}

We briefly discuss the degree four case.

We first consider $F$ an arbitrary field of characteristic different from $2$.  Let $K/F$ be a cyclic Galois field extension of degree four with ${\rm Gal}(K/F)=\langle \sigma\rangle$.   Then $K$ has exactly one  quadratic subfield $E={\rm Fix}(\sigma^2)$.

Let $a\in K\smallsetminus F$ and consider $(K/F,\sigma,a)$.  This is a 16-dimensional algebra over $F$.  We know by Theorem~\ref{SteeleThm} that if $a\notin E$ then $(K/F,\sigma,a)$ is a division algebra.

Using Definition~\ref{D:cyclic} or the general results mentioned above, the nonassociative cyclic $E$-algebra $B=(K/E,\sigma^2, a)$ can be embedded as the  $F$-subalgebra generated by $1$ and $t^2$ in $(K/F,\sigma,a)$; it is $8$-dimensional over $F$.

\begin{lemma}
\begin{enumerate}[(a)]
\item If $a\not\in E$, then $B$ is a proper nonassociative quaternion division algebra over $E$, hence also a  nonassociative division algebra over $F$.

\item If  $a\in E$, then $B$  is the right nucleus of $A$.  In this case, it is an associative quaternion algebra over $E$, which is a division algebra if and only if $a\not\in \normKFim$.
Furthermore, then $B$ is a division algebra if and only if $(K/F,\sigma,a)$ is a division algebra.
\end{enumerate}
\end{lemma}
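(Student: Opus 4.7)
The proof strategy is to treat (a) and (b) separately, leveraging results already established earlier in the excerpt.

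For part (a), observe that $K/E$ is a cyclic Galois extension of prime degree $2$, generated by the restriction of $\sigma^2$, and $a \in K \smallsetminus E$ by assumption. I would then invoke either Theorem~\ref{SteeleThm} applied to $(K/E,\sigma^2,a)$ (the only proper intermediate subfield of $K/E$ is $E$ itself, and $a \notin E$), or the earlier result that any proper nonassociative quaternion algebra over its base field is a division algebra. Either way, $B$ is a proper nonassociative quaternion division algebra over $E$. The passage from ``division algebra over $E$'' to ``division algebra over $F$'' is immediate from the standard characterization of finite-dimensional division algebras as those with no zero divisors, a condition independent of the chosen scalar field.

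For part (b), suppose $a\in E$. Since $a \notin F$ by hypothesis, the stabilizer of $a$ in $G = \langle \sigma \rangle$ is exactly $H = \langle \sigma^2\rangle$, so $E = \mathrm{Fix}(H)$. The formula attributed to \cite[Theorem 1]{P24} in the discussion preceding the lemma then identifies
$$
{\rm Nuc}_r(A) = (K/E, \sigma^2, a) = B.
$$
Because $a$ now lies in the base field $E$, $B$ is a \emph{classical} associative cyclic algebra of degree $2$ over $E$, that is, an associative quaternion algebra. The classical theory of associative cyclic algebras (see e.g.\ \cite[\S1.4]{J96}) gives that such an algebra is a division algebra if and only if $a$ is not a norm from $K^\times$ down to $E^\times$, which is the asserted criterion. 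Finally, the equivalence ``$B$ division $\iff A$ division'' is immediate from the criterion recalled in Subsection~\ref{subsec:nonass}, namely that $(K/F,\sigma,a)$ is a division algebra if and only if its right nucleus is.

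The main conceptual point, rather than an obstacle, is recognizing that the statement $B = \mathrm{Nuc}_r(A)$ is what makes the last equivalence in (b) nontrivial: the implication ``$A$ division $\implies B$ division'' is trivial from $B$ being a subalgebra, but the converse truly requires the nucleus criterion. Everything else amounts to correctly invoking previously established machinery.
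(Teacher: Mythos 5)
Your proof is correct, and it assembles exactly the ingredients the paper itself sets up for this purpose (the paper states this lemma without proof): the $m=2$ division theorem of Petit for part (a), the identification $\mathrm{Nuc}_r(A)=(K/E,\sigma^2,a)$ via the stabilizer computation for part (b), the classical norm criterion for associative quaternion algebras, and the fact that $A$ is a division algebra if and only if its right nucleus is. One small point in your favour: you correctly read the norm condition as $a\notin N_{K/E}(K^\times)$, which is the only sensible interpretation, since $a\in E\smallsetminus F$ can never lie in $N_{K/F}(K^\times)\subseteq F^\times$ and the condition as literally printed would be vacuous.
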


We give an explicit, but coarse, classification of these algebras in the case of local nonarchimedean fields as follows (note that these may not all be division algebras).

\begin{proposition}
Let $F$ be a local nonarchimedean field of residual characteristic different from $2$. Then every nonassociative cyclic algebra of degree four over $F$
is of one of the following types, and moreover, algebras corresponding to distinct field extensions and distinct generators of the Galois group are nonisomorphic:
\begin{enumerate}
\item $(L_4/F,\sigma,a)$, corresponding to the unique unramified extension of degree $4$.  Here $B=(L_4/F(\sqrt{\varepsilon}),\sigma^2, a)$.

\item  $(K_0/F,\sigma,a)$, corresponding to the extension $K_0 = E(\sqrt{\varepsilon_E\varpi})$ where $E=F(\sqrt{\varepsilon})$ and $\varepsilon_E$ is a nonsquare of $\RR_E^\times\smallsetminus \RR^\times$.   Here $B=(K_0/E,\sigma^2, a)$.
 \item  If $-1\in (F^\times)^2$: for $i\in \{1,2,3,4\}$, $(K_i/F,\sigma,a)$, corresponding to the extension $K_i=F(\sqrt[4]{\varepsilon^i\varpi})$.  The intermediate subfield is $E=F(\sqrt{\varpi})$ if $i$ is even and  $E=F(\sqrt{\varepsilon\varpi})$ if $i$ is odd. Here $B=(K_i/E,\sigma^2, a)$.
 \end{enumerate}
\end{proposition}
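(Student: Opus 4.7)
The proposition combines two kinds of assertions: first, a complete enumeration of the cyclic Galois extensions $K/F$ of degree four (together with the identification of the intermediate quadratic subfield $E=\mathrm{Fix}(\sigma^2)$ in each case); second, nonisomorphism statements for distinct field extensions and for distinct generators of the Galois group.

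The nonisomorphism assertions are immediate from results already in hand. Since $K$ is always (a copy of) the left nucleus of $(K/F,\sigma,a)$, as recorded in Section~\ref{subsec:nonass}, Lemma~\ref{Lem:differentextensions} forces distinct $K$ to give nonisomorphic algebras. Since $m=4\geq 3$, Theorem~\ref{sigmadistinct} handles the case of distinct generators $\sigma$ of $\Gal(K/F)$.

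For the enumeration, I would use local class field theory. Since $p\neq 2$ and $\gcd(4,p)=1$, every degree four extension of $F$ is tamely ramified. Using the decomposition $F^\times\cong\mu_{q-1}\times(1+\PP)\times\Z$ of \eqref{E:productform} and the fact that $1+\PP$ is uniquely $4$-divisible, cyclic degree four extensions correspond to equivalence classes of continuous surjections $\phi\colon\mu_{q-1}\times\Z\to\Z/4\Z$ modulo $\Aut(\Z/4\Z)=\{\pm 1\}$. The key dichotomy is whether $4\mid q-1$, equivalently, by Hensel's Lemma~\ref{le:Hensel}, whether $-1\in(F^\times)^2$. When $4\nmid q-1$, the image of $\phi|_{\mu_{q-1}}$ lies in the unique order-two subgroup of $\Z/4\Z$, and direct enumeration yields exactly two extensions: the unramified $L_4$ and a mixed extension, which takes the form $K_0=L_2(\sqrt{\varepsilon_E\varpi})$ for some nonsquare unit $\varepsilon_E\in\RR_{L_2}^\times\smallsetminus\RR^\times$. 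When $4\mid q-1$, Kummer theory applies: cyclic degree four extensions are $F(\sqrt[4]{c})$ for $c$ of order four in $F^\times/(F^\times)^4\cong(\Z/4\Z)^2$, yielding six extensions modulo $\Aut(\Z/4\Z)$. One then identifies $L_4=F(\sqrt[4]{\varepsilon})$, a single mixed extension $K_0=F(\sqrt[4]{\varepsilon\varpi^2})$ which agrees with the description above for a suitable choice of $\varepsilon_E$, and the four totally ramified extensions $K_i=F(\sqrt[4]{\varepsilon^i\varpi})$ for $i\in\{1,2,3,4\}$.

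The intermediate subfield $E=\mathrm{Fix}(\sigma^2)$ is then read off by squaring the Kummer generator: for $L_4$ and $K_0$, $E=L_2$; for $K_i$ the element $\sqrt{\varepsilon^i\varpi}$ lies in $K_i$, and it generates $F(\sqrt{\varpi})$ when $i$ is even and $F(\sqrt{\varepsilon\varpi})$ when $i$ is odd. The identification $B=(K/E,\sigma^2,a)$ then follows from the discussion preceding the proposition. The main obstacle lies in the case $4\nmid q-1$: one must verify that a nonsquare unit $\varepsilon_E\in\RR_{L_2}^\times\smallsetminus\RR^\times$ exists for which $L_2(\sqrt{\varepsilon_E\varpi})/F$ is cyclic rather than Klein four, which reduces to a careful analysis of the action of the Frobenius $\tau\in\Gal(L_2/F)$ on $\sqrt{\varepsilon_E\varpi}$ and the verification of a suitable norm condition on the cocycle $\tau(\varepsilon_E)/\varepsilon_E$.
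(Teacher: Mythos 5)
Your proposal is correct and follows the same skeleton as the paper's proof: reduce the nonisomorphism claims to the invariance of the left nucleus (Lemma~\ref{Lem:differentextensions}) and to Theorem~\ref{sigmadistinct}, then enumerate the cyclic quartic extensions of $F$ and read off $E=\mathrm{Fix}(\sigma^2)$. Where you go beyond the paper is in the enumeration itself: the paper simply asserts ``it can be shown that $F$ admits $6$ cyclic Galois extensions of degree four if $-1\in(F^\times)^2$ and only two otherwise,'' whereas you derive the counts $2$ and $6$ from surjections $\mu_{q-1}\times\Z\to\Z/4\Z$ modulo $\Aut(\Z/4\Z)$ (using that $1+\PP$ is uniquely $4$-divisible when $p\neq 2$) and, when $4\mid q-1$, from Kummer theory on $F^\times/(F^\times)^4\cong(\Z/4\Z)^2$; both computations check out, as does the identification of the intermediate subfields by squaring the Kummer generators. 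You also correctly flag the one genuinely delicate point that the paper passes over in silence: that $L_2(\sqrt{\varepsilon_E\varpi})/F$ is cyclic rather than Klein four. For completeness, this verification closes cleanly: since $(\RR_{L_2}^\times)^2$ and the preimage under $N_{L_2/F}$ of $(\RR^\times)^2$ are both index-two subgroups of $\RR_{L_2}^\times$ with the former contained in the latter, they coincide, so \emph{every} nonsquare unit $\varepsilon_E$ has $\tau(\varepsilon_E)/\varepsilon_E$ a square in $L_2$ (norm $1$) and $N_{L_2/F}(\varepsilon_E\varpi)\in\varepsilon\varpi^2(F^\times)^2$, which are exactly the Galois and cyclicity criteria.
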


\begin{proof}
Let $A$ be a nonassociative cyclic algebra  of degree four over $F$. Since the left nucleus of $A$ is invariant under isomorphism, it suffices to generate a list of distinct cyclic Galois extensions $K$ of $F$ of degree $4$. Moreover, by Theorem~\ref{sigmadistinct}, we know that for the two distinct choices of generator $\sigma$ of $\Gal(K/F)$ the resulting algebras will be nonisomorphic.

It can be shown that $F$ admits $6$ cyclic Galois extensions of degree four if $-1\in (F^\times)^2$ and only two otherwise.
The first of these is the unique unramified extension $L_4$, with intermediate subfield $E=L_2$.   The second is a partially ramified extension $K_0$, obtained as the quadratic extension of $E=L_2$ by an element of the form $u\varpi$, where $u$ is chosen as a nonsquare of $\RR_{L_2}^\times$ that does not lie in $\RR^\times$.

When $-1 \in (F^\times)^2$, then $F$ contains a primitive $4$th root of unity, and there are correspondingly four additional Galois totally ramified extensions, given as $K_i=F(\sqrt[4]{\varepsilon^i\varpi})$.  Their unique intermediate subfield is $E=F(\sqrt{\varpi})$ if $i$ is even and $E=F(\sqrt{\varepsilon\varpi})$ if $i$ is odd.
\end{proof}

Thus we count four different types for each such field $F$ if $-1\not\in (F^\times)^2$ and twelve if $-1\in (F^\times)^2$.

While in the associative case, all cyclic algebras of degree $m$ over $F$ are isomorphic to an algebra of the type $(L_m/F,\sigma,a)$ for some $a\in F^\times$, this proposition shows once again that the situation in the nonassociative setting is more varied.
\\\\
 \emph{Acknowledgements:} This paper was written while the second author was a visitor at the University of Ottawa in the spring of 2024. She would like to thank the Department of Mathematics and Statistics for its hospitality and congenial atmosphere.

\providecommand{\bysame}{\leavevmode\hbox to3em{\hrulefill}\thinspace}
\providecommand{\MR}{\relax\ifhmode\unskip\space\fi MR }
\providecommand{\MRhref}[2]{%
  \href{http://www.ams.org/mathscinet-getitem?mr=#1}{#2}
}
\providecommand{\href}[2]{#2}

\end{document}